\begin{document}
	
	\bibliographystyle{plain}
	
	\pagestyle{myheadings}
	\thispagestyle{empty}
	\newtheorem{theorem}{Theorem}[section]
	\newtheorem{corollary}[theorem]{Corollary}
	\newtheorem{definition}{Definition}
	\newtheorem{guess}{Conjecture}
	\newtheorem{claim}[theorem]{Claim}
	\newtheorem{problem}{Problem}
	\newtheorem{question}{Question}
	\newtheorem{lemma}[theorem]{Lemma}
	\newtheorem{proposition}[theorem]{Proposition}
	\newtheorem{observation}[theorem]{Observation}
	\newenvironment{proof}{\noindent {\bf
			Proof.}}{\hfill\rule{3mm}{3mm}\par\medskip}
	\newcommand{\remark}{\medskip\par\noindent {\bf Remark.~~}}
	\newcommand{\ch}{{\rm ch}}
	\newcommand{\de}{\em}
	\newtheorem{example}{Example}
	\newcommand{\set}[1]{\{#1\}}
	\newcommand{\norm}[1]{{|#1|}}
	\newcommand{\NN}{\mathbb{N}}
	
	\newcommand{\GS}{G/\mathcal{S}}
	\newcommand{\LS}{L_{\mathcal{S}}}
	\newcommand{\BS}{B_{\mathcal{S}}}
	\newcommand{\XS}{X_{\mathcal{S}}}
	\newcommand{\YS}{Y_{\mathcal{S}}}
	\newcommand{\mS}{\mathcal{S}}
	\newcommand{\MS}{M_{\mathcal{S}}}
	
	\title{\bf Bad list assignments for non-$k$-choosable $k$-chromatic graphs with $2k+2$-vertices}
	
	\author{Jialu Zhu\thanks{Department of Mathematics, Zhejiang Normal University, Email:jialuzhu@zjnu.edu.cn, }         \and
		Xuding Zhu\thanks{Department of Mathematics, Zhejiang Normal University, Email: xdzhu@zjnu.edu.cn,  Grant numbers: NSFC 11971438,U20A2068, ZJNSFC LD19A010001. }}
	
	\maketitle
	
	
	\begin{abstract}
		It was conjectured by Ohba, and proved by Noel, Reed and Wu  that $k$-chromatic graphs $G$ with $|V(G)| \le 2k+1$ are chromatic-choosable. This upper bound on $|V(G)|$ 
		is tight:  if $k$ is even, then  $K_{3 \star (k/2+1), 1 \star (k/2-1)}$ and $K_{4, 2 \star (k-1)}$ are $k$-chromatic graphs with $2 k+2$ vertices that are not chromatic-choosable. It was proved in [Zhu and Zhu, Minimum Non-chromatic choosable graphs with given chromatic number, arXiv:2201.02060] that these are the only non-$k$-choosable complete $k$-partite graphs with $2k+2$ vertices. 
		For   $G\in \{K_{3 \star (k/2+1), 1 \star (k/2-1)},K_{4, 2 \star (k-1)}\}$,
		a bad list assignment of $G$ is a $k$-list assignment $L$ of $G$ such that $G$ is not $L$-colourable. 
		Bad list assignments for $G=K_{4, 2 \star (k-1)}$ were characterized in [Enomoto, Ohba,  Ota and Sakamoto, Choice number of some complete multi-partite graphs, Discrete Mathematics 244 (2002) 55–66].  In this paper, we first give a simpler proof of this result, and then we characterize bad list assignments for $G=K_{3 \star (k/2+1), 1 \star (k/2-1)}$.  Using these results, we characterize all non-$k$-choosable   (non-complete) $k$-partite graphs  with $2k+2$ vertices.  
	\end{abstract}
	
	\noindent  {\bf Keywords:}  chromatic-choosable graphs, Ohba conjecture, bad list assignment.

	\section{Introduction}
	A \emph{proper  colouring} of a graph $G$ is a mapping $f:V(G) \rightarrow \mathbb{N}$ such that $f(u) \neq f(v)$ for every edge $uv$ of $E(G)$.  The {\em chromatic number } $\chi(G)$ of $G$ is the minimum $k$ such that $G$ has a proper $k$-colouring, i.e., a proper colouring $f$ with $|f(V(G))| \le k$. 
	\noindent A {\em $k$-list assignment } of a graph $G$ is a mapping $L$ which
	assigns to each vertex $v$ a set $L(v)$ of at least $k$ permissible colours, i.e., $|L(v)| \ge k$.  An {\em
		$L$-colouring} of $G$ is a proper   colouring   of $G$ which colours each vertex $v$ with
	a colour from $L(v)$. We say that $G$ is {\em
		$L$-colourable} if there exists an $L$-colouring of $G$, and $G$ is {\em $k$-choosable} if $G$ is $L$-colourable for each $k$-list assignment $L$ of $G$. More
	generally, for a function  $f: V(G) \to \NN$, an  {\em
		$f$-list assignment} of $G$ is a list assignment  $L$  with $|L(v)| \ge f(v)$ for all $v \in V(G)$. We say $G$ is {\em $f$-choosable} if $G$ is $L$-colourable for every $f$-list assignment $L$ of $G$.   The {\em choice number} $\ch(G)$ of $G$ is
	the minimum $k$ for which $G$ is $k$-choosable.   List colouring of
	graphs was introduced independently by Erd\H{o}s-Rubin-Taylor  \cite{ERT1980} and Vizing \cite{Vizing76}, and has been studied extensively in the literature  (cf. \cite{tuzasurvey}).
	
	It follows from the definitions that $\chi(G) \le ch(G)$ for any graph $G$, and it is well-known   \cite{ERT1980} that  bipartite graphs can have arbitrarily large choice number. 
	A graph $G$ is called {\em chromatic-choosable} if $\chi(G)=ch(G)$.  Some families of graphs are conjectured or proven to be   chromatic-choosable.  For example, the list colouring conjecture, proposed independently by Albertson and Collins, Gupta, and Vizing (see \cite{Hagg}),  asserts that line graphs are chromatic-choosable. This conjecture was proved   for bipartite graphs \cite{Gal}, for complete graphs of odd order \cite{Hagg} and for complete graphs of order $p+1$ for primes $p$ \cite{Schauz}.  Also it was conjectured by 
	Ohba \cite{Ohba2002} and proved by Noel, Reed and Wu \cite{NRW2015} that $k$-chromatic graphs $G$ with $|V(G)| \le 2 \chi(G)+1$ are chromatic-choosable.  We shall refer this result as   Noel-Reed-Wu Theorem in the remainder of this paper.
	
	We denote by $K_{k_1\star n_1,k_2 \star n_2, \ldots, k_q \star n_q}$ the complete  multi-partite graph with $n_i$ partite sets of size  $k_i$, for $i=1,2,\ldots, q$.  If $n_j=1$, then the number $n_j$ is omitted from the notation.
	For example, $K_{4, 2\star (k-1)}$ is the complete $k$-partite graph  with one partite set of size $4$, and $k-1$ partite sets of size $2$.  It was proved in \cite{EOOS2002} that if $k$ is an even integer, then $K_{4, 2\star (k-1)}$ and 
	$K_{3 \star (k/2+1), 1 \star (k/2-1)}$ are not $k$-choosable.
	
	So  Noel-Reed-Wu Theorem is tight, when $\chi(G)$ is even. 
	Noel \cite{Noel2013} conjectured that if $k$ is odd, then the bound is not tight, i.e.,  all $k$-chromatic graphs with $2k+2$ vertices are $k$-choosable. This conjecture was confirmed in \cite{zhuzhu}, where the following result was proved.
	
	\begin{theorem}
		\label{thm-zz}
		If $G$ is a complete $k$-partite non-$k$-choosable graph with $2k+2$ vertices, then  $k$ is even and  $G = K_{4, 2\star (k-1)}$ or $G= K_{3\star (k/2+1), 1 \star (k/2 -1)}$.
	\end{theorem}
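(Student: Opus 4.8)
The plan is to prove the contrapositive: to show that every complete $k$-partite graph on $2k+2$ vertices that is not one of the two listed graphs is $k$-choosable, and that for odd $k$ \emph{every} such graph is $k$-choosable. So fix a complete $k$-partite $G$ with parts $P_1,\dots,P_k$, suppose for contradiction that some $k$-list assignment $L$ admits no $L$-colouring, and (shrinking lists) assume $|L(v)|=k$ for all $v$. The reformulation I would use throughout is that two vertices of $G$ are adjacent exactly when they lie in different parts, so an $L$-colouring is precisely a choice of pairwise disjoint colour sets $S_1,\dots,S_k$ with $S_i\cap L(v)\neq\emptyset$ for every $v\in P_i$; that is, $G$ is $L$-colourable iff the parts admit pairwise disjoint transversals of their lists, and non-colourability asserts that no such disjoint transversals exist.

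The engine is a reduction lemma: if some part $P_i$ has a colour $c$ lying in $L(v)$ for all $v\in P_i$, then colouring all of $P_i$ with $c$ and deleting $c$ from every other list reduces $G$ to the complete $(k-1)$-partite graph $G-P_i$ with lists of size $\ge k-1$. When $|P_i|\ge 3$ the graph $G-P_i$ has at most $2(k-1)+1$ vertices, so the Noel–Reed–Wu Theorem makes it $(k-1)$-choosable, hence colourable, a contradiction; thus in a counterexample \emph{no part of size $\ge 3$ has a colour common to all its lists}. For a size-$2$ part $B_i$ whose two lists meet, the same deletion yields a complete $(k-1)$-partite graph on $2(k-1)+2$ vertices, which sets up an induction on $k$: either that graph is $(k-1)$-choosable (contradiction) or, by the inductive hypothesis, it is one of the two exceptional graphs for parameter $k-1$, strongly constraining $L$. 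Hence I may assume every size-$2$ part has two disjoint lists and push to the remaining rigid configurations.

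The part-size profiles are governed by the identity $\sum_{j\ge 3}(j-2)m_j=m_1+2$, where $m_j$ is the number of parts of size $j$; this follows from $\sum_j m_j=k$ and $\sum_j j\,m_j=2k+2$. When $m_1=0$ the only profiles are $K_{4,2\star(k-1)}$ and $K_{3,3,2\star(k-2)}$, and I would show the latter is $k$-choosable (for $k\ge 3$) by combining the ``no common colour in a size-$3$ part'' constraint with the disjoint-lists constraint on the size-$2$ parts to build the disjoint transversals explicitly. When $m_1\ge 1$ a singleton is the cheapest part to satisfy and supplies slack; the aim is to show that unless all surplus is concentrated in size-$3$ parts with no size-$2$ parts, namely the profile $K_{3\star(k/2+1),1\star(k/2-1)}$, one can always assemble the disjoint transversals. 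For the residual cases I would delete one vertex from a part of size $\ge 2$, obtaining a $2(k-1)+1$-vertex graph to which Noel–Reed–Wu applies, and then analyse which colourings of it fail to extend: extension fails only if every colour of the deleted vertex's list already appears outside its part in every such colouring, a condition I would show cannot persist once a reduction has been excluded.

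The crux, and the step I expect to be hardest, is the parity conclusion: showing that the surviving rigid configurations force $k$ to be even, and that for odd $k$ no bad assignment exists at all. Once no reduction applies, the list structure is essentially forced, with the colour classes split into blocks matched to the parts; the requirement that the four lists of the size-$4$ part (respectively the lists of the many size-$3$ parts) admit no disjoint transversal becomes equivalent to the existence of a pairing of the colour blocks with the parts, a $1$-factorization-type configuration realizable exactly when $k$ is even. For odd $k$ I would instead extract an actual colouring from a counting argument modulo $2$, showing the non-extension obstruction cannot hold simultaneously across the relevant vertex deletions. Verifying that this pairing structure is genuinely \emph{forced} rather than merely possible, and converting the parity count into a concrete colouring, are the delicate points; the non-choosability of the two families themselves for even $k$ I would take from the construction of Enomoto, Ohba, Ota and Sakamoto.
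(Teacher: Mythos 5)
This statement is not proved in the paper at all: Theorem~\ref{thm-zz} is imported from the reference \cite{zhuzhu} (arXiv:2201.02060), and the present paper only reuses it (together with Lemma~\ref{ind3}, which is quoted from that same reference). So your attempt has to be judged as a free-standing proof, and as such it is a strategy outline with genuine gaps at precisely the hard points, not a proof.

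Three concrete gaps. First, your case analysis of part-size profiles is incomplete: the identity $\sum_{j\ge 3}(j-2)m_j=m_1+2$ is correct, but for $m_1\ge 1$ it admits infinitely many profiles, including parts of arbitrarily large size (e.g.\ $K_{m_1+4,\,2\star(k-m_1-1),\,1\star m_1}$), and nothing in your sketch handles parts of size $\ge 5$; the reduction lemma via Noel--Reed--Wu only kills parts with a \emph{common} colour, and large parts without a common colour remain untouched. Second, your induction through a size-$2$ part $B_i$ with intersecting lists is circular in spirit: when $G-B_i$ turns out to be one of the exceptional graphs for $k-1$, the inductive hypothesis (the bare statement of the theorem) tells you nothing about the reduced lists, so ``strongly constraining $L$'' requires the characterization of \emph{bad list assignments} of the exceptional graphs --- which is exactly the content of the present paper (Theorems~\ref{unique4} and~\ref{unique3}), and those characterizations are themselves proved using Theorem~\ref{thm-zz} (via Observation~\ref{obs-0}). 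To make your induction legitimate you would have to prove the bad-list characterization simultaneously within the induction, which your proposal does not attempt. Third, the parity conclusion is asserted rather than derived: for odd $k$ you must in particular prove that $K_{4,2\star(k-1)}$ and $K_{3,3,2\star(k-2)}$ are $k$-choosable, and the ``$1$-factorization-type configuration realizable exactly when $k$ is even'' claim is never turned into an argument. The known proof instead works with a bad assignment, bounds $|C|$ below $|V(G)|$ by Kierstead's argument, identifies carefully chosen pairs of vertices inside parts, and applies Hall's theorem to the auxiliary bipartite graph together with the $f$-choosability Lemma~\ref{ind3}; none of this machinery (or a substitute for it) appears in your sketch.
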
  
	
	Thus any $k$-chromatic non-$k$-choosable graph with $2k+2$ vertices is a spanning subgraph of  $G = K_{4, 2\star (k-1)}$ or $G= K_{3\star (k/2+1), 1 \star (k/2 -1)}$,  and  $k$ is an even integer.
	
	For $G = K_{4, 2\star (k-1)}$ or $G= K_{3\star (k/2+1), 1 \star (k/2 -1)}$,  a {\em bad list assignment } of $G$ is a $k$-list assignment $L$ of $G$ such that $G$ is not $L$-colourable. 
	Bad list assignments for $G=K_{4, 2 \star (k-1)}$ were characterized in \cite{EOOS2002}.  We shall give a simpler proof of this result. Then we characterize bad list assignments for  $G=K_{3 \star (k/2+1), 1 \star (k/2-1)}$:   If $k \ge 4$, then  a $k$-list assignment $L$ of $G=K_{3 \star (k/2+1), 1 \star (k/2-1)}$ is bad if and only if  
	\begin{itemize}
		\item $ | \bigcup_{v \in V(G)} L(v)| = \frac 32 k$,
		\item for each $3$-part $P$ of $G$, $\bigcap_{v \in P}L(v) =\emptyset$.
	\end{itemize}
	If $k=2$, then  a $k$-list assignment $L$ of $K_{3,3}$ is bad if and only if it is isomorphic to one of the list assignments in Figure \ref{1}.
	
	As a consequence,   we characterize all non-$k$-choosable $k$-chromatic graphs with $2k+2$ vertices.

	\begin{corollary}
		\label{col-1}
		For $k \ge 3$, every $k$-chromatic non-$k$-choosable graph $G$ with $2k+2$ vertices satisfies   $ \overline{K_4}  \vee     (2k_{k-1}) \subseteq G \subseteq K_{4, 2\star (k-1)}$, where $k$ is even,  $\overline{K_4}  \vee    (2k_{k-1}) $ is the join of   $\overline{K_4}$, an independent set of size $4$,  and $2K_{k-1}$, which  is the disjoint union of two copies of $K_{k-1}$.
	\end{corollary}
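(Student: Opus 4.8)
The plan is to derive the corollary by combining Theorem~\ref{thm-zz} with the two characterizations of bad list assignments established above, reducing every case to the graph $K_{4,2\star(k-1)}$. First I would fix a non-$k$-choosable graph $G$ with $|V(G)|=2k+2$ and $\chi(G)=k$, together with a bad $k$-list assignment $L$ witnessing that $G$ is not $L$-colourable. A proper $k$-colouring of $G$ partitions $V(G)$ into $k$ independent sets; adding all edges between distinct classes produces a complete $k$-partite spanning supergraph $H\supseteq G$. Since $G\subseteq H$, any $L$-colouring of $H$ restricts to one of $G$, so $L$ is bad for $H$ as well; thus $H$ is a non-$k$-choosable complete $k$-partite graph on $2k+2$ vertices. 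By Theorem~\ref{thm-zz}, $k$ is even and $H\in\{K_{4,2\star(k-1)},\,K_{3\star(k/2+1),1\star(k/2-1)}\}$. This immediately yields the parity claim and splits the argument into two cases according to the isomorphism type of $H$.

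For the upper bound $G\subseteq K_{4,2\star(k-1)}$, the case $H=K_{4,2\star(k-1)}$ is immediate. The substantive case is $H=K_{3\star(k/2+1),1\star(k/2-1)}$: here I would invoke the characterization of bad list assignments of $K_{3\star(k/2+1),1\star(k/2-1)}$, namely $|\bigcup_{v}L(v)|=\tfrac32 k$ and $\bigcap_{v\in P}L(v)=\emptyset$ for every $3$-part $P$. Setting aside the complete graph $G=K_{3\star(k/2+1),1\star(k/2-1)}$ itself, which is the one genuine exception already isolated by Theorem~\ref{thm-zz}, I would show that if $G$ is a proper subgraph then it re-embeds into $K_{4,2\star(k-1)}$; equivalently, passing to complements, that $\overline G$ contains $K_4\cup(k-1)K_2$. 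The mechanism is that the rigidity of the characterizing list forces the missing edges of $G$ to be numerous and suitably placed: I would argue that if $\overline G$ failed to contain a $K_4$ together with a perfect matching on the remaining $2k-2$ vertices, then one could identify the endpoints of the absent edges of $G$ and explicitly produce an $L$-colouring, contradicting that $L$ is bad for $G$. This re-embedding step is where I expect the main difficulty to lie, since deleting a single edge never enlarges the maximum independent set of $K_{3\star(k/2+1),1\star(k/2-1)}$ beyond $3$, so the argument must exploit the list structure to locate many coordinated non-edges at once.

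Once $G\subseteq K_{4,2\star(k-1)}$ is secured, with $V(G)$ partitioned into a part $A$ of size $4$ and pairs $B_1,\dots,B_{k-1}$, I would establish the lower bound $\overline{K_4}\vee(2K_{k-1})\subseteq G$ using the characterization of bad list assignments of $K_{4,2\star(k-1)}$. The strategy is to show that each edge of some copy of $\overline{K_4}\vee(2K_{k-1})$ is forced: if any edge joining $A$ to $\bigcup_i B_i$ were absent from $G$, or if $G\big[\bigcup_i B_i\big]$ failed to contain two vertex-disjoint transversal $(k-1)$-cliques, then the rigidity of $L$ would let the two endpoints of a suitably chosen non-edge receive a common colour and the colouring be completed, again contradicting that $L$ is bad. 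Collecting the forced edges exhibits an embedded $\overline{K_4}\vee(2K_{k-1})$, completing the sandwich $\overline{K_4}\vee(2K_{k-1})\subseteq G\subseteq K_{4,2\star(k-1)}$. Throughout, the only tools beyond Theorem~\ref{thm-zz} are the two bad-list characterizations, and both containments follow from the same principle: the absence of a prescribed edge relaxes exactly one colour constraint, which suffices to $L$-colour $G$.
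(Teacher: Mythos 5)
Your overall skeleton --- pass to a complete $k$-partite spanning supergraph $H$, apply Theorem~\ref{thm-zz} to conclude that $k$ is even and $H\in\{K_{4,2\star(k-1)},K_{3\star(k/2+1),1\star(k/2-1)}\}$, then exploit the two bad-list characterizations --- is the same as the paper's, and your handling of the case $H=K_{4,2\star(k-1)}$ is in essence the paper's Theorem~\ref{thm-4proper}: taking $L$ to be the list assignment of Theorem~\ref{unique4}, one identifies non-adjacent vertices with identical lists and applies the Noel--Reed--Wu Theorem, and if some edge between the $4$-part and another part is absent, one colours its endpoints alike and completes the colouring (the paper does this completion via Lemma~\ref{ind3}; you assert it without naming a tool, which is a gap of detail rather than of conception).

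The genuine gap is the case $H=K_{3\star(k/2+1),1\star(k/2-1)}$. Your plan is to show that a non-$k$-choosable \emph{proper} subgraph $G$ of this graph ``re-embeds'' into $K_{4,2\star(k-1)}$ and then satisfies the sandwich. But the sandwich is outright incompatible with $G\subseteq K_{3\star(k/2+1),1\star(k/2-1)}$: the containment $\overline{K_4}\vee(2K_{k-1})\subseteq G$ would force $\overline{K_4}\vee(2K_{k-1})\subseteq K_{3\star(k/2+1),1\star(k/2-1)}$, which in complements reads $(k/2+1)K_3\cup(k/2-1)K_1\subseteq K_4\cup K_{k-1,k-1}$; the right-hand graph has exactly one non-bipartite component, a $K_4$, which contains at most one vertex-disjoint triangle, while the left-hand graph needs $k/2+1\ge 3$ of them. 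So no quantity of ``numerous, suitably placed'' missing edges can ever produce the configuration you aim for; the only way this case can close is by proving that no non-$k$-choosable proper subgraph of $K_{3\star(k/2+1),1\star(k/2-1)}$ exists at all. That is precisely the paper's Theorem~\ref{thm-3proper} (for $k\ge 4$ every proper subgraph is $k$-choosable), proved by reducing to a single deleted edge, using the bad-list structure ($|C|=3k/2$ and empty intersection on each $3$-part) to colour the two parts meeting that edge with three colours, and finishing with Lemma~\ref{ind3}. Your proposal defers exactly this content as ``the main difficulty,'' under a framing that cannot succeed, so the decisive step of the corollary's proof is missing. (Your side remark that the full graph $K_{3\star(k/2+1),1\star(k/2-1)}$ is itself an exception to the stated sandwich is correct --- the corollary is implicitly about graphs other than this one, in line with the abstract's ``non-complete'' qualifier --- but noting the exception does not repair the argument for its proper subgraphs.)
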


	\section{Some notation and preliminaries}
	
	Assume that $G = K_{4, 2\star (k-1)}$ or $G= K_{3\star (k/2+1), 1 \star (k/2 -1)}$, and $L$ is a bad  $k$-list assignment of $G$.
	
	For a subset $X$ of $V(G)$, let 
	$$L(X) = \bigcup_{v \in X} L(v).$$
	Let $C= L(V(G))$. It was proved in   \cite{Kierstead} that if $L$ is a bad list assignment of $G$ with $|C|$ minimum, then $|C| <   |V(G)| $. We observe that by the same argument, without assuming the minimality of $|C|$, the conclusion still holds for bad list assignments of $G$. Indeed,  if $|C| \ge |V(G)|$, then we let $X$ be a maximum subset of $V(G)$ with $|X| > |L(X)|$ (it is possible that $X = \emptyset$).  As $|V(G)| \le |C|$, we know that $X \ne V(G)$. Thus $|X| \le 2k+1$ and by Noel-Wu-Reed Theorem, we have
	$G[X]$ is $L$-colourable. The maximality of $X$ implies that for any $Y \subseteq V(G)-X$, 
	$|L(Y)-L(X)| \ge |Y|$. By Hall
	's Theorem, there is an injective colouring $f$ of $G-X$ such that for each vertex $v$, $f(v) \in L(v)-L(X)$. 
	Hence $G$ is $L$-colourable.
	
	Each  partite set of   $G$ is called a {\em part} of $G$, and a   part of size $i$ (respectively, at least $i$ or at most $i$) is called a {\em $i$-part} (respectively,{\em  $i^+$-part}, or {\em $i^-$-part}).
	
	For each $3^+$-part $P$ of $G$, let 
	$$t_P = \max\{|L(u ) \cap L( v)|: u \ne v, u,v \in P\}.$$
	
	For $c \in C$ and $C' \subseteq C$, let $$L^{-1}(c)=\{v: c \in L(v)\}, \ L^{-1}(C') = \bigcup_{c \in C'}L^{-1}(c).$$

	
	For a part $P$ of $G$ and integer $i$, let 
	$$C_{P,i} = \{c \in C: |L^{-1}(c) \cap P| = i \}, C_{P,i^+} = \{c \in C: |L^{-1}(c) \cap P| \ge i \}.$$

	\begin{definition}
		\label{def-gs}
		Assume $\mathcal{S}$  is a partition of $V(G)$, in which each part $S \in \mathcal{S}$ is an independent set.
		We denote by $\GS$ the graph obtained from $G$ by identifying each part $S \in \mathcal{S}$ into a single vertex $v_S$. Let $\LS$ be the list assignment of $G/\mathcal{S}$ defined as $\LS(v_S)=\bigcap_{v \in S}L(v)$.
	\end{definition} 
	
	If $S=\{v\}\in \mathcal{S}$ is a singleton part of $\mathcal{S}$, then we  may also denote   $v_S$ by $v$.
	In this case, $\LS(v)=L(v)$. In the partitions $\mS$ constructed in this paper, most parts of $\mS$ are singleton parts. To define $\mS$, it suffices to list  its non-singleton parts.

	\begin{definition}
		\label{def-bs}
		Let $\BS$ be the bipartite graph with partite sets $V(\GS)$ and $C$, in which $\{v_S,c\}$ is an edge if and only if $c \in \LS(v_S)$. 
	\end{definition}			
	
	It is obvious that a matching $M$ in $\BS$ covering $V(\GS)$ induces a proper $\LS$-colouring of $\GS$, which in turn induces a proper $L$-colouring of $G$. As $G$ is not $L$-colourable,   no such matching $M$ exists. By Hall's Theorem,  there is a subset $\XS$ of $V(\GS)$ such that 
	$|\XS| > |N_{\BS}(\XS)|$. 
	
	In the remainder of the  paper, for a given  partition  $\mathcal{S}$ of $V(G)$, let  $\XS$ be a maximum subset of $V(\GS)$ for which  $|\XS| > |N_{\BS}(\XS)|$. 
	Let $$\YS=	N_{\BS}(\XS)=\bigcup_{v_S \in \XS}\LS(v_S).$$ 
	
	\begin{observation}
		\label{obs-0} The following facts will be used often in this paper. 
		\begin{enumerate}
			\item[(1)] No two vertices in the same part of $G$ have the same list, and no colour is contained only  in the lists of vertices in  a same part. Thus for any part $P$ of $G$,    $\bigcup_{v \in V(G)-P}L(v)=C $.
			\item[(2)] Each $2^+$-part has no common colour.
		\end{enumerate}
	\end{observation}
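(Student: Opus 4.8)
The plan is to prove each assertion by contradiction, in every case producing an $L$-colouring of $G$ from a colouring of a smaller complete multipartite graph to which the Noel-Reed-Wu Theorem applies. Throughout I would use that theorem in the form already invoked in the preliminary paragraph of Section 2: if $H$ is any graph with $\chi(H)\le j$ and $|V(H)|\le 2j+1$, then $H$ is $j$-choosable. (This follows from the stated theorem by adding edges to raise the chromatic number to exactly $j$, which is possible since here $|V(H)|\ge j$, and using that a spanning subgraph of a $j$-choosable graph is $j$-choosable.) I also record that, by Theorem \ref{thm-zz}, since $G$ is complete $k$-partite, has $2k+2$ vertices, and admits a bad list assignment, the integer $k$ is even, so $k-1$ is odd; this parity will be decisive at the very end.

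For the first statement in (1), suppose $u,v$ lie in a common part $P$ with $L(u)=L(v)$. I would take $\mathcal{S}$ to be the partition identifying $u$ and $v$. Then $\GS$ is complete $k$-partite on $2k+1$ vertices, and $\LS(v_S)=L(u)\cap L(v)=L(u)$ has size at least $k$, so $\LS$ is a $k$-list assignment; the Noel-Reed-Wu Theorem yields an $\LS$-colouring, which lifts to an $L$-colouring of $G$ by colouring both $u$ and $v$ with the colour of $v_S$, a contradiction. For the second statement in (1), suppose a colour $c$ satisfies $L^{-1}(c)\subseteq P$ for a single part $P$. Since $P$ is independent, I would colour every vertex of $L^{-1}(c)$ with $c$; this is proper, and every remaining vertex $w$ has $c\notin L(w)$ by definition of $L^{-1}(c)$, so deleting $L^{-1}(c)$ leaves the lists of size at least $k$ and introduces no conflict with $c$. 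The remaining graph has at most $2k+1$ vertices and chromatic number at most $k$, hence is $L$-colourable, and this extends to $G$. The identity $\bigcup_{v\in V(G)-P}L(v)=C$ is then immediate, since $L^{-1}(c)\not\subseteq P$ forces every colour of $C$ into the list of some vertex outside $P$.

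For (2), suppose a part $P$ with $|P|\ge 2$ has a common colour $c\in\bigcap_{w\in P}L(w)$. Colouring all of $P$ with $c$ is proper, and after deleting $P$ the remaining lists lose only $c$, so $G''=G-P$ carries a list assignment $L''$ with $|L''(w)|\ge k-1$ and has chromatic number $k-1$. When $|P|\ge 3$, which covers the $4$-part of $K_{4,2\star(k-1)}$ and every $3$-part of $K_{3\star(k/2+1),1\star(k/2-1)}$, the graph $G''$ has at most $2k-1=2(k-1)+1$ vertices, so the Noel-Reed-Wu Theorem gives an $L''$-colouring and hence an $L$-colouring of $G$, a contradiction.

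The hard part will be the remaining case $|P|=2$, which occurs only for a $2$-part of $K_{4,2\star(k-1)}$; here $G''=K_{4,2\star(k-2)}$ has $2(k-1)+2$ vertices, exactly one beyond the Noel-Reed-Wu bound, so that theorem is useless. The point I would exploit is parity: $G''$ is complete $(k-1)$-partite on $2(k-1)+2$ vertices, and since $k-1$ is odd, Theorem \ref{thm-zz} guarantees that no complete $(k-1)$-partite graph on $2(k-1)+2$ vertices is non-$(k-1)$-choosable, so $G''$ is $(k-1)$-choosable. As $|L''(w)|\ge k-1$, this supplies the needed $L''$-colouring and the final contradiction. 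Thus the crux of the whole observation is this single $2$-part case, where one must leave the Noel-Reed-Wu Theorem behind and instead invoke the full classification of Theorem \ref{thm-zz} together with the evenness of $k$.
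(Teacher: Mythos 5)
Your proof is correct and takes essentially the same route as the paper: colour the offending colour class or part monochromatically, delete it, and colour the rest via the Noel--Reed--Wu Theorem, resp.\ Theorem \ref{thm-zz} together with the oddness of $k-1$. The paper just compresses this presentation: it declares (1) trivial and disposes of every $2^+$-part in (2) with a single invocation of Theorem \ref{thm-zz} (using $|V(G-P)|\le 2(k-1)+2$ and $k-1$ odd), which is exactly the parity argument you isolate for the $|P|=2$ case.
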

	
	\begin{proof}
		(1) is trivial.
		
		(2)   Assume $P$ is a $2^+$-part of $G$ with $\bigcap_{v\in P} L(v)\neq \emptyset$, say $c\in  \bigcap_{v\in P} L(v)$. We colour all vertices of $P$ by colour $c$. Let $G'=G-P$ and $L'(v)=L(v)-\{c\}$ for any vertex $v$ of $G'$. As $|V(G')| \le 2\chi(G')+2$ and $\chi(G')=k-1$ is odd, it follows from Theorem \ref{thm-zz} that $G'$ is $L'$-colourable, and hence  $G$ is $L$-colourable, a contradiction.
	\end{proof}

	The following lemma was proved in \cite{zhuzhu} and also will be used in this paper.
	\begin{lemma}
		\label{ind3}
		Let $G$ be a complete multipartite graph with parts of size at most $3$. Let  $\mathcal{A},$  $\mathcal{D}$, $\mathcal{B}$, $\mathcal{C}$ be a partition of the set of parts of $G$ into classes  such that $\mathcal{A}$ and $\mathcal{D}$ contains only parts of size $1$, while $\mathcal{B}$ contains all parts of size $2$  and $\mathcal{C}$ contains all parts of size $3$. Let $k_1, d, k_2, k_3$ denote the cardinalities of classes $\mathcal{A}$, $\mathcal{D}$, $\mathcal{B}$, $\mathcal{C}$ respectively. Suppose that classes $\mathcal{A}$ and $\mathcal{D}$ are ordered, i.e.\ $\mathcal{A}= (A_1, \ldots, A_{k_1})$
		and $\mathcal{D}=(D_1, \ldots, D_d)$. If $f:V(G)\to \NN$ is a function for which the following conditions hold
		\begin{alignat}{2}
		f(v)&\geq k_2+k_3+i, &&\quad\text{for all $1\leq i \leq k_1$ and $v\in A_i$}\tag{a-1}\label{invK:a-1}\\
		f(v)&\geq 2 k_3+k_2+k_1+i, &&\quad\text{for all $1\leq i \leq d$ and $v\in D_i$}\tag{d-1}\label{invK:s-1}\\
		f(v)&\geq k_2+k_3, &&\quad\text{for all $v\in B \in\mathcal{B}$}\tag{b-1}\label{invK:b-1}\\
		f(u)+f(v)&\geq 3k_3+2k_2+k_1+d, &&\quad\text{for all $u,v\in B\in\mathcal{B}$}\tag{b-2}\label{invK:b-2}\\
		f(v)&\geq k_2+k_3, &&\quad\text{for all $v\in C\in\mathcal{C}$}\tag{c-1}\label{invK:c-1}\\
		f(u)+f(v)&\geq 2 k_3 + 2k_2+ k_1, &&\quad\text{for all $u,v\in C\in\mathcal{C}$}\tag{c-2}\label{invK:c-2}\\
		\sum_{v\in C}f(v)&\geq 4 k_3+ 3  k_2+2k_1+d-1, &&\quad\text{for all $C\in\mathcal{C}$} \tag{c-3}\label{invK:c-3}
		\end{alignat}
		then $G$ is $f$-choosable.	
	\end{lemma}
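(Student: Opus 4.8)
The plan is to prove the statement by induction on $|V(G)| = k_1 + d + 2k_2 + 3k_3$, after reducing to the case $|L(v)| = f(v)$ for every $v$ (shrinking each list to size exactly $f(v)$ only makes colouring harder and leaves the hypotheses on $f$ untouched). Writing $C = \bigcup_{v} L(v)$, recall that list-colouring a complete multipartite graph is equivalent to choosing, for each vertex, a colour from its list so that every colour class lies inside a single part; equivalently, distinct parts must receive disjoint sets of colours. The ordered classes $\mathcal{A}$ and $\mathcal{D}$, together with the ``$+i$'' shape of (a-1) and (d-1), strongly suggest a sequential strategy which the inequalities are designed to encode: colour the size-$2$ and size-$3$ parts and the class $\mathcal{A}$ first, as economically as possible, and then colour $D_1, D_2, \ldots, D_d$ one at a time, arranging that at the moment $D_i$ is reached the number of colours already committed to other parts is at most $2k_3 + k_2 + k_1 + (i-1)$; since $f(D_i) \ge 2k_3 + k_2 + k_1 + i$, this leaves $D_i$ at least one free colour. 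The analogous budget for $A_i$ is $k_2 + k_3 + (i-1)$, which matches (a-1).

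For the inductive step I would isolate one part to ``clear'' and delete, together with a single colour, so that the remainder is an instance of the lemma of strictly smaller order. The cleanest reduction treats the last singleton $D_d$: if some colour $c \in L(D_d)$ lies in no other list, colour $D_d$ with $c$, delete $D_d$, and remove $c$ from every list. The reduced instance has $d' = d-1$ with all remaining lists unchanged, and one checks directly that (a-1), (b-1), (c-1), (c-2) are untouched and (d-1) is inherited (it holds for all $i \le d-1$), while (b-2) and (c-3) only become \emph{weaker}, since each of them contains $d$ additively; in particular the tight bound (c-3) loses exactly the unit that the decrement of $d$ releases. When no such private colour is available for a high-requirement vertex, the lists must overlap heavily, which both bounds $|C|$ and forces many size-$2$ and size-$3$ parts to contain a common colour; in that regime I would instead clear a big part by assigning one shared colour to two of its vertices, reducing $k_2$ or $k_3$, reclassifying the surviving vertex (in the size-$3$ case) into $\mathcal{A}$ or $\mathcal{D}$, re-indexing the ordered classes, and again verifying that the seven inequalities descend.

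The main obstacle is the coordination between the two colouring stages under this tight colour budget. A size-$2$ or size-$3$ part need not possess a common colour, so it may be forced to spend two (or three) colours rather than one, and every such extra colour eats into the budget on which the $\mathcal{D}$-vertices depend. Conditions (b-2), (c-2) and (c-3) are precisely what compensate: they guarantee that whenever a big part must spend extra colours, its vertices carry correspondingly larger lists, so the deficit can be absorbed elsewhere. The genuinely delicate case is a size-$3$ part, where the pair bound (c-2) and the sum bound (c-3)---with its tight ``$-1$''---must be played against one another to decide whether the part is coloured with two colours or with three; making this decision consistently across all size-$3$ parts while keeping the re-indexed instance within the hypotheses of the lemma is where essentially all the work lies, and it is the step I expect to be hardest to carry out correctly.
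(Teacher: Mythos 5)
You should first be aware that the paper contains no proof of this lemma at all: it is imported verbatim from \cite{zhuzhu} (``The following lemma was proved in [zhuzhu]\ldots''), so the benchmark is the induction carried out in that reference (itself an extension, by the class $\mathcal{D}$, of a lemma of Kozik--Micek--Zhu \cite{KMZ2014}). Measured against that, your proposal is a programme rather than a proof: the two places where all the content lives --- establishing a usable dichotomy for the inductive step, and verifying that the seven conditions survive the reduction --- are exactly the places you defer, and you say so yourself (``where essentially all the work lies\ldots the step I expect to be hardest to carry out correctly''). In addition, the endgame case in which no part has two vertices sharing a colour (where one must finish with a Hall-type counting argument) is never addressed.

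Two of the deferred steps are not mere bookkeeping; as stated they fail. First, the dichotomy anchored at $D_d$: if no colour of $L(D_d)$ is private, you claim the lists ``must overlap heavily,'' bounding $|C|$ and forcing many $2$- and $3$-parts to contain a common colour. But non-privacy only says each colour of $L(D_d)$ appears in \emph{some} other list; those occurrences can be spread one per part, which forces no intra-part overlap and gives no bound on $|C|$. Second, the $3$-part clearing step fails numerically by exactly the unit of tightness you noticed. Suppose $u,v$ in a $3$-part $P$ share a colour $c$; colour both with $c$, delete them, remove $c$ from all other lists, and move the survivor $w$ into $\mathcal{A}$, so $k_3' = k_3-1$, $k_1' = k_1+1$, and $f'(x) \ge f(x)-1$. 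Then for any other $3$-part, condition (c-2) now demands $f'(x)+f'(y) \ge 2(k_3-1)+2k_2+(k_1+1) = 2k_3+2k_2+k_1-1$, while you only have $f(x)+f(y)-2 \ge 2k_3+2k_2+k_1-2$: short by one. Placing $w$ into $\mathcal{D}$ instead requires $f'(w) \ge 2(k_3-1)+k_2+k_1+j$ for its insertion index $j$, far beyond what (c-1) or (c-2) supply for a single surviving vertex. So the natural reduction does not descend within the hypotheses; repairing it requires choosing \emph{which} pair of $P$ to colour using (c-3), coordinating insertion positions in the ordered classes, and a different case split --- precisely the machinery of the actual proof in \cite{zhuzhu}, none of which is present in your sketch.
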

	
	\section{Bad list assignments for $K_{4, 2\star(k-1)}$}
	The following theorem is the characterization of bad list assignments of $K_{4, 2\star(k-1)}$  which has been proved in \cite{EOOS2002}. In this section, we give an alternate (and shorter) proof of this theorem. 
	
	\begin{theorem}
		\label{unique4}
		Assume $L$ is a bad  $k$-list assignment of $G=K_{4, 2\star(k-1)}$. Assume  $P_1=\{u_1, v_1, x_1, y_1\}$  and  $P_i=\{u_i, v_i\}$ for $2\le i\le k$ and $C=\bigcup_{v \in V(G)}L(v)$. Then  $C$ can be partitioned into $A$ and $B$ with $|A|=|B|=k$, where $A$ can be further partitioned into $A_1,A_2,A_3,A_4$ such that $|A_1|=|A_2|$, $|A_3|=|A_4|$ (where $A_3,A_4$ maybe empty),  and  $B$   can be further partitioned into $B_1,B_2$ with $|B_1|=|B_2|$, and the following hold:
		\begin{itemize}
			\item $L(u_1)=A_1\cup A_3\cup  B_1$, $L(v_1)=A_1\cup A_4\cup  B_2$, $L(x_1)=A_2\cup  A_4\cup B_1$, $L(y_1)=A_2\cup A_3\cup  B_2$.
			\item $L(u_i) =A$, $L(v_i)= B$ for $2\le i  \le k$.
		\end{itemize}	
	\end{theorem}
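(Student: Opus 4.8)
The plan is to first normalise the list assignment and then to reduce the problem to colouring the $2$-parts after the $4$-part has been dealt with. Passing to a minimal bad sublist assignment $L'\le L$ (removing a colour from an oversized list cannot create an $L$-colouring, since any colouring from a smaller list is also one from the larger), I may assume $|L(v)|=k$ for every $v$; the size conditions in the conclusion will then force $L=L'$. The facts I start from are all in hand: $|C|\le 2k+1$; by Observation \ref{obs-0}(2) the two lists of each $2$-part $P_i$ are disjoint, hence form two disjoint $k$-sets inside $C$, and $\bigcap_{v\in P_1}L(v)=\emptyset$; and by Observation \ref{obs-0}(1) every colour occurs in a list outside $P_1$, i.e.\ in some $2$-part.

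The engine is an ``extend from the $4$-part'' argument. The subgraph induced by the $2$-parts is $K_{2\star(k-1)}$, a graph with chromatic number $k-1$ and $2(k-1)$ vertices; since $2(k-1)\le 2(k-1)+1$, the Noel--Reed--Wu Theorem makes it $(k-1)$-choosable. Hence, if $P_1$ can be properly coloured from $L$ using only a colour set $T$ for which $|L(v)\setminus T|\ge k-1$ at every $2$-part vertex $v$, then the $2$-parts can be coloured from $L\setminus T$ and $G$ is $L$-colourable. Applying this with a pair $T=\{c,c'\}$ that \emph{covers} $P_1$ (meaning each of the four lists of $P_1$ contains $c$ or $c'$), the hypothesis $|L(v)\setminus T|\ge k-1$ fails only at a vertex containing both $c$ and $c'$; Lemma \ref{ind3} governs the analogous extensions when $T$ has three or four colours. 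Since $G$ is not $L$-colourable, this yields the decisive necessary condition: for every covering pair $\{c,c'\}$ of $P_1$, some $2$-part vertex contains both $c$ and $c'$.

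I would then turn this condition into the structure of $P_1$. If a colour lay in three of the four lists, say in $L(u_1),L(v_1),L(x_1)$, then pairing it with any colour of $L(y_1)$ gives a covering pair, and running this over all of $L(y_1)$ together with the ``common $2$-part vertex'' condition overloads the $2$-part lists; combined with $\bigcap_{v\in P_1}L(v)=\emptyset$ this shows every colour lies in \emph{exactly} two lists of $P_1$, so $C=C_{P_1,2}$. Writing $|L(u_1)|+|L(v_1)|+|L(x_1)|+|L(y_1)|=4k$ and using $C=C_{P_1,2}$ gives, for each $w\in P_1$, that the pairwise intersection sizes $|L(w)\cap L(w')|$ sum to $k$; that is, these six numbers form a $k$-regular edge weighting of $K_4$, and a one-line linear computation forces the three pairs of opposite edges to be equal. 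Summing all six also yields $|C|=2k$ (the separate exclusion of $|C|=2k+1$, where two colours of multiplicity one could otherwise survive, is handled by the three- and four-colour instances of the engine together with a direct covering matching in a quotient $\GS$). At this point each $2$-part partitions $C$ into two halves $A_i\sqcup B_i$, and the six pairwise intersections group, by opposite edges, into the three perfect matchings of $P_1$.

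Finally I would fit the $2$-part data to the $4$-part data. Using the covering-pair condition once more, I would show that all $2$-parts induce the \emph{same} bipartition $\{A,B\}$ of $C$ --- if two of them crossed, one could assemble a covering matching in a suitable $\BS$, contradicting non-colourability --- and that $A,B$ are unions of the pairwise-intersection classes, with $B$ (the common list of the $v_i$) consisting of exactly one perfect matching's two classes and $A$ (the common list of the $u_i$) of the other two matchings' four classes. That distinguished matching consists of the two pairs whose intersections lie in $B$; since those intersections partition the $k$-set $B$ they each have size $k/2$, and these are $B_1,B_2$, while the remaining four classes, equal in opposite pairs, are $A_1,A_2$ (with $|A_1|=|A_2|$) and $A_3,A_4$ (with $|A_3|=|A_4|$). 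After labelling $u_1,v_1,x_1,y_1$ so that $B_1=L(u_1)\cap L(x_1)$ and $B_2=L(v_1)\cap L(y_1)$, reading off each list gives exactly the displayed expressions. I expect the main obstacle to be precisely this last fitting step, together with the exclusion of $|C|=2k+1$: proving that the $2$-part lists are unions of the $P_1$-classes, that they all give one bipartition, and that $B$ picks up a full perfect matching is the genuine combinatorial core, whereas the regular-weighting identity for $K_4$ and the final read-off are routine.
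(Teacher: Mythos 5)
Your overall plan (dispose of $P_1$ first, then extend to the $2$-parts via Noel--Reed--Wu or Lemma \ref{ind3}, with Hall-type arguments on quotient graphs) is in the same spirit as the paper's proof, but the proposal has a step that fails and leaves the hardest steps unproved. The failing step is your elimination of colours lying in three lists of $P_1$. Your covering-pair engine, applied to $c\in L(u_1)\cap L(v_1)\cap L(x_1)$ and the colours of $L(y_1)$, yields only the following necessary condition: for every $c'\in L(y_1)$ some $2$-part vertex contains both $c$ and $c'$, i.e.\ $L(y_1)\subseteq \bigcup_{w\in W_c}L(w)$, where $W_c$ is the set of $2$-part vertices whose lists contain $c$. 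By Observation \ref{obs-0}(2) we have $|W_c|\le k-1$, and $k-1$ lists of size $k$ can comfortably cover the $k$ colours of $L(y_1)$; there is no ``overload'' and no contradiction. The paper's proof of this very step cannot be replaced by the pair engine: it colours all vertices of $L^{-1}(c)\cap P_1$ with $c$, deletes them, and applies Lemma \ref{ind3} to what remains, and the point is precisely that Lemma \ref{ind3} tolerates the leftover vertex $y_1$ demanding $k$ colours while each $2$-part keeps one vertex with list size $k$ and one with list size $\ge k-1$ (pair-sums $\ge 2k-1$). Your engine, by insisting on colouring $y_1$ immediately with some $c'$, throws away exactly the slack that makes the argument work.

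The second problem is that the two places you yourself call ``the genuine combinatorial core'' are where the paper does its real work, and the proposal offers no argument for them. The exclusion of $|C|=2k+1$ is, in the paper, a separate case: one contracts an intersecting pair of $P_1$, takes the maximal Hall violator $\XS$ in $\BS$, and forces $|\XS|$ upward until $|\YS|\ge|\XS|$ gives a contradiction; ``handled by a direct covering matching in a quotient $\GS$'' is not a substitute for this. More seriously, the fitting step --- showing the $2$-part lists are unions of the six intersection classes of $P_1$, that all $2$-parts induce one common bipartition $\{A,B\}$ of $C$, and that $B$ consists of one perfect matching's two classes --- is exactly the content of the paper's Claim \ref{clm-u1v1} and its corollary: for each $2$-subset $U$ of $P_1$ with $\bigcap_{v\in U}L(v)\ne\emptyset$, contracting both $U$ and $P_1-U$ and analysing the Hall violator produces a $k$-set $Y_U$ and a transversal $S_U$ of the $2$-parts all of whose lists are exactly $Y_U=L(S_U)\cup(\bigcap_{v\in U}L(v))\cup(\bigcap_{v\in P_1-U}L(v))$, and distinct $2$-subsets give either identical or disjoint pairs $(S_U,Y_U)$. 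The parts of your proposal that are carried out (the normalisation, the $k$-regular edge-weighting of $K_4$ forcing equal opposite intersections once $C=C_{P_1,2}$, and the final read-off of the lists) are correct, but they are the routine portion; as written, the proof has genuine gaps at both of its load-bearing points.
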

	
	\begin{proof}    
		If there is a colour $c\in C$ such that $|L^{-1}(c)\cap P_1|\ge 3$, then we colour vertices $L^{-1}(c)\cap P_1$ by colour $c$. 
		Let $G'=G-(L^{-1}(c) \cap P_1)$ and    $L'(v)=L(v)-\{c\}$ for $v \in V(G')$. It is easy to verify that $G'$ and $L'$ satisfy the condition of Lemma \ref{ind3} (here we need to use the fact that any 2-part $P$ has a vertex $v$ with $c \notin L(v)$, see (2) of  Observation \ref{obs-0}).  Therefore $G'$ is $L'$-colourable,  and hence $G$ is $L$-colourable, a contradiction. 
		
		Thus $|L^{-1}(c)\cap P_1|\le 2$ for any $c\in C$.
		By (2) of Observation \ref{obs-0}, we have $ |C| \ge 2k$. Recall that $|C| \le |V(G)|-1 = 2k+1$. Depending on the size of $C$, we consider two cases.
		
		\medskip
		\noindent
		{\bf Case 1} $|C|=2k+1$.
		\medskip 
		

		Since $|C|=2k+1$ and $|L(u_1)|+|L(v_1)|+|L(x_1)|+|L(y_1)|\ge 4k$, we may assume 
		$ L(u_1 ) \cap L( v_1) \ne \emptyset$. Let $\mS$ be the partition of $V(G)$ with one non-singleton part $S=\{u_1,v_1\}$, and consider the graph $\GS$ and list assignment $\LS$. Let $\XS$ and $\YS$ be as defined in Section 2.  We denote by $P'_i$ be the parts of $\GS$.
		
		It is obvious that $\XS-\{v_S\} \ne \emptyset$, and hence   $|\YS|\ge |L(v)| \ge k$ for $v \in \XS-\{v_S\}$.   Hence   $|\XS|\ge k+1$. If there is an index $i \ge 2$ such that $P'_i\subseteq \XS$, then $|\YS|\ge |L'(u_i)|+|L'(v_i)|\ge 2k$ and hence $|\XS|=|V(\GS)|=2k+1$. But in this case, by Observation \ref{obs-0}, $|\YS|=|C|=2k+1=|\XS|$, a contradiction.
		
		So $|P'_i\cap X|\le 1$ for $i \ge 2$, and hence $|\XS| \le k+2$. Since $|\XS|\ge k+1$, $|\XS\cap P'_1|\ge 2$. This implies that $|\YS|\ge k+1$ and hence $|\XS|= k+2$, i.e., $P'_1\subseteq X$. So $|\YS|\ge |L'(v_S)|+|L'(x_1)\cup L'(y_1)|\ge 1+k+1=k+2 =|\XS|$,   a contradiction.

		\medskip
		\noindent
		{\bf Case 2} $|C|=2k$.
		\medskip

		As $|C_{P_1,1}|+2|C_{P_1,2}| = \sum_{v \in P_1}|L(v)|=4k$ and
		$|C_{P_1,1}|+|C_{P_1,2}|\le |C|$,  we conclude that  $|C_{P_1,2}|=2k$, i.e., $C_{P_1,2}=C$.  
		
		\begin{claim}
			\label{clm-2subset}
			For any 2-subset $U$ of $P_1$, $|\bigcap_{v \in U} L(v)| = |\bigcap_{v \in P_1-U}L(v)|$.
		\end{claim}
		\begin{proof}
			Assume $U$ is a 2-subset   of $P_1$. Then 
			$(\bigcap_{v \in U} L(v)) \cap (\bigcup_{v \in P_1-U} L(v)) = \emptyset$. So
			$$2k=|C| \ge |\bigcap_{v \in U} L(v)|+ |\bigcup_{v \in P_1-U} L(v)| =  |\bigcap_{v \in U} L(v)| + 2k -|\bigcap_{v \in P_1-U} L(v)|.$$
			Hence 
			$|\bigcap_{v \in U} L(v)| \le |\bigcap_{v \in P_1-U}L(v)|$. By symmetry, 	$|\bigcap_{v \in P_1-U} L(v)| \le |\bigcap_{v \in U}L(v)|$. So
			$|\bigcap_{v \in U} L(v)| = |\bigcap_{v \in P_1-U}L(v)|$.
		\end{proof}	
		
		\begin{claim}
			\label{clm-u1v1}
			Assume $U$ is a 2-subset  of $P_1$  with  $\bigcap_{v \in U} L(v) \ne \emptyset$.  Then there is a $k$-subset $Y_U$ of $C$ and a $(k-1)$-subset $S_U$ of $V(G)$ such that for $i=2,3,\ldots, k$,  $|S_U \cap P_i|=1$  and  
			$$L(S_U \cap P_i)=L(S_U) \cup (\bigcap_{v \in U} L(v)) \cup (\bigcap_{v \in P_1-U} L(v)) = Y_U.$$
		\end{claim}
		\begin{proof}
			Assume $U=\{u_1, v_1\}$ and $L(u_1 ) \cap L( v_1) \ne \emptyset$. By Claim \ref{clm-2subset}, we know that 
			$L(x_1 ) \cap L( y_1) \ne \emptyset$. Let $\mS$ be the partition of $V(G)$ whose non-singleton parts are $S=\{u_1,v_1\}, T=\{x_1, y_1\}$. 
			
			
			It is easy to see that $\XS - \{v_S,v_T\} \ne \emptyset$. Hence $|\YS| \ge |L(v)| \ge k$ for some $v \in \XS-\{v_S,v_T\}$. This implies that $|\XS| \ge k+1$.  If $|\XS| > k+1$, then    $P_i\subseteq X$ for some $2\le i\le k$, and hence $|\YS|\ge 2k =|V(\GS)| \ge |\XS|$, a contradiction. So $|\XS|=k+1$ and $k=|\YS|$. 
			This implies that $P'_1 \subseteq \XS$ and 
			$|\XS \cap P_i| =1$ for $2 \le i \le k$. 
			Let $S_U=\XS-\{v_S, v_T\}$ and $Y_U=\YS$. We have
			$ L(S_U) \cup (\bigcap_{v \in U} L(v)) \cup (\bigcap_{v \in P_1-U} L(v)) = Y_U.$ For $i=2,\ldots, k$ and $v \in S_U \cap P_i$, since $|Y_U|=k \le  |L(v)|$, we have $Y_U=L(v)$. 
		\end{proof}
		
		
		\begin{corollary}
			For distinct 2-subsets $U$ and $U'$ of $P_1$, either $S_{U} = S_{U'}$ and $Y_{U} = Y_{U'}$, or 
			$S_{U} \cap S_{U'} = \emptyset$ and $Y_U \cap Y_{U'} = \emptyset$. 
		\end{corollary}

		By symmetry, we may assume  $L(u_1 ) \cap L( v_1) \ne \emptyset$, $S_{\{u_1,v_1\}}=\{u_2,u_3, \ldots, u_k\}$. 
		
		Since $|L(u_1 ) \cap L( v_1)|= |L(x_1 ) \cap L( y_1)|$
		and $|L(u_1 ) \cap L( v_1)| + |L(x_1 ) \cap L( y_1)| \le |Y_{\{u_1,v_1\}}|=k$, we have 
		$|L(u_1 ) \cap L( v_1)|= |L(x_1 ) \cap L( y_1)| \le k/2$. 
		
		As $C_{P_1,2} =C$,  we may assume that $L(u_1 ) \cap L( x_1) \ne \emptyset$. Similarly, we have 
		$|L(u_1 ) \cap L( x_1)|= |L(v_1 ) \cap L( y_1)| \le k/2$.

		\medskip
		\noindent
		{\bf Case 2(a)}   $L(u_1 ) \cap L( y_1) = \emptyset$.
		\medskip
		
		Then $$C_{P_1,2}=C = (L(u_1 ) \cap L( v_1)) \cup (L(x_1 ) \cap L( y_1)) \cup 
		(L(u_1 ) \cap L( x_1)) \cup (L(v_1 ) \cap L( y_1)).$$
		Hence  $|L(u_1 ) \cap L( v_1)|= |L(x_1 ) \cap L( y_1)| =|L(u_1 ) \cap L( x_1)|= |L(v_1 ) \cap L( y_1)| = k/2$.
		Then $$(L(u_1 ) \cap L( v_1)) \cup (L(x_1 ) \cap L( y_1))=Y_{\{u_1,v_1\}},$$ $$(L(u_1 ) \cap L( x_1)) \cup (L(v_1 ) \cap L( y_1)) = Y_{\{u_1,x_1\}}.$$
		As $L(u_1 ) \cap L( v_1) \cap L(x_1) = \emptyset$, we conclude that $Y_{\{u_1, v_1\}} \cap Y_{\{u_1,x_1\}} = \emptyset$ and $S_{\{u_1, v_1\}} \cap S_{\{u_1,x_1\}} = \emptyset$. 
		
		Thus we have $S_{\{u_1, x_1\}} = \{v_2,v_3, \ldots, v_k\}$. Theorem \ref{unique4} holds with $A_1=L(u_1 ) \cap L( v_1), A_2 = L(x_1 ) \cap L( y_1), A_3=A_4 = \emptyset$, $B_1=L(u_1 ) \cap L( x_1), B_2= L(v_1 ) \cap L( y_1)$.

		\medskip
		\noindent
		{\bf Case 2(b)}   $L(u_1 ) \cap L( y_1) \ne \emptyset$.
		\medskip
		
		Since 
		\begin{equation*}
		\begin{aligned}
		C_{P_1,2}=C &= (L(u_1 ) \cap L( v_1)) \cup (L(x_1 ) \cap L( y_1)) \cup 
		(L(u_1 ) \cap L( x_1)) \cup (L(v_1 ) \cap L( y_1)) \\
		& \cup 
		(L(u_1 ) \cap L( y_1)) \cup (L(v_1 ) \cap L( x_1)),
		\end{aligned}
		\end{equation*} 
		we may assume that $S_{\{u_1,v_1\}} = S_{\{u_1 y_1\}}$,
		$Y_{\{u_1,v_1\}} = Y_{\{u_1 y_1\}}$,
		$S_{\{u_1,v_1\}} \cap S_{\{u_1 x_1\}} = \emptyset$
		and $Y_{\{u_1,v_1\}} \cap Y_{\{u_1 x_1\}} = \emptyset$.
		Then Theorem \ref{unique4} holds with $A_1=L(u_1 ) \cap L( v_1), A_2 = L(x_1 ) \cap L( y_1), A_3=L(u_1 ) \cap L( y_1),
		A_4 = L(v_1 ) \cap L( x_1) $, $B_1=L(u_1 ) \cap L( x_1), B_2= L(v_1 ) \cap L( y_1)$.
		
		This completes the proof of Theorem \ref{unique4}.
	\end{proof}

	\section{Bad list assignments for  $K_{3\star (k/2+1), 1\star(k/2-1)}$}
	
	Let $G= K_{3\star (k/2+1), 1\star(k/2-1)}$.
	Assume $L$ is a bad $k$-list assignment of $G$. By Observation \ref{obs-0}, for any 3-part $P$ of $G$, 
	$\bigcap_{v \in P}L(v) = \emptyset$. Thus 
	$2|C| \ge \sum_{v \in P}|L(v)| \ge 3k$. Hence 
	$|C| \ge 3k/2$. If $|C| = 3k/2$, then $G$ is not $L$-colourable, because if $f$ is a proper $L$-colouring of $G$, then $|f(P)| \ge 2$ for each 3-part $P$ and $|f(P)|=1$ for each 1-part $P$. As $f(P) \cap f(P') = \emptyset$ for distinct parts $P$ and $P'$, we have 
	$|C| \ge \sum |f(P)| \ge 2 \times (k/2+1) + (k/2-1) = 3k/2+1$, a contradiction. 
	
	If $k=2$, then there are some other bad $2$-list assignment of $K_{3,3}$.

	\begin{theorem}
		Any bad $2$-list assignment of $K_{3,3}$ is isomorphic to one of the list assignments in Figure \ref{1}.
	\end{theorem}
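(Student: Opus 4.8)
The plan is to translate $L$-colourability of $K_{3,3}$ into a purely graph-theoretic partition condition on the colour set $C$, and then to run a short case analysis on $|C|$.

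Write $P_1=\{a_1,a_2,a_3\}$ and $P_2=\{b_1,b_2,b_3\}$ for the two parts, and let $H_1$ (respectively $H_2$) be the graph on vertex set $C$ whose three edges are the lists $L(a_1),L(a_2),L(a_3)$ (respectively $L(b_1),L(b_2),L(b_3)$). First I would record the following reformulation: $G$ is $L$-colourable if and only if $C$ admits a partition $C=X_1\cup X_2$ in which $X_1$ is independent in $H_2$ and $X_2$ is independent in $H_1$. Indeed, from an $L$-colouring $f$ one takes $X_1=f(P_1)$ and $X_2=C\setminus X_1$: since $f(a_i)\in L(a_i)\cap X_1$ no edge $L(a_i)$ lies inside $X_2$, and since $f(b_j)\in L(b_j)\cap X_2$ no edge $L(b_j)$ lies inside $X_1$. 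Conversely, if such a partition exists then $X_1$ independent in $H_2$ forces $L(b_j)\cap X_2\neq\emptyset$ and $X_2$ independent in $H_1$ forces $L(a_i)\cap X_1\neq\emptyset$, so colouring $P_1$ from $X_1$ and $P_2$ from $X_2$ is proper because $X_1\cap X_2=\emptyset$. Thus $L$ is bad exactly when no such partition exists.

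Next I would pin down the isomorphism types of $H_1$ and $H_2$. By Observation \ref{obs-0}(1) the three edges of each $H_i$ are distinct (so $H_i$ is simple) and $H_i$ spans $C$, since no colour lies only in the lists of one part; by Observation \ref{obs-0}(2) the three edges of $H_i$ have no common vertex, so no vertex has degree $3$ and $\Delta(H_i)\le 2$. Hence each $H_i$ is a simple spanning graph with exactly three edges and maximum degree at most $2$. Since the paragraph preceding the theorem gives $|C|\ge 3$ and the argument of Section 2 gives $|C|\le |V(G)|-1=5$, the degree sequence of $H_i$ is forced, and $H_i\cong C_3$, $H_i\cong P_4$, or $H_i\cong P_3\cup K_2$ according as $|C|=3,4,5$.

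I would then treat the three cases. For $|C|=3$ both $H_1,H_2$ are the unique triangle on $C$, whose independent sets have size at most $1$; as $|X_1|+|X_2|=3$ no admissible partition exists, so $L$ is bad and is unique up to isomorphism. For $|C|=4$ an admissible partition must have $|X_1|=|X_2|=2$, and running through the three size-$2$ independent sets of the path $H_1$ shows $L$ is bad iff each of their complements is an edge of $H_2$; as $H_2$ has only three edges this forces $H_2$ to be the reversed path, again a single configuration up to relabelling. For $|C|=5$ the admissible partitions split $C$ as $2+3$, and the key observation is that $H_1\cong P_3\cup K_2$ has exactly two maximum (size-$3$) independent sets, whose size-$2$ complements must both be edges of $H_2$ for $L$ to be bad; this already forces the centre of the $P_3$ and the $K_2$ of $H_2$, determining $H_2$ uniquely, after which a short check confirms the resulting assignment really is bad. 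In each case the partner $H_2$ is canonical in $H_1$, so all bad assignments with a given value of $|C|$ are mutually isomorphic, and I would match the three resulting assignments with the pictures in Figure \ref{1}. The main obstacle is the $|C|=5$ case: one must organise the independent-set bookkeeping so that the two maximum independent sets of $H_1$ pin down $H_2$ completely and then verify that this forced $H_2$ also hits all the size-$3$ complements, while correctly identifying when two configurations coincide up to isomorphism — in particular using the freedom to permute vertices within a part and to interchange the two parts.
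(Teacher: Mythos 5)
Your proposal is correct, and it takes a genuinely different route from the paper's proof. The paper argues by colouring extensions: using Observation \ref{obs-0} it forces a common colour, say $1$, into $L(u_1)\cap L(v_1)\cap L(u_2)\cap L(v_2)$, then shows that greedy extensions of the partial colouring $u_1,v_1\mapsto 1$ force $L(w_1)=(L(u_2)\cup L(v_2))-\{1\}$ and $L(w_2)=(L(u_1)\cup L(v_1))-\{1\}$, yielding the single parametric family $L(u_1)=\{1,a\}$, $L(v_1)=\{1,b\}$, $L(w_1)=\{c,d\}$, $L(u_2)=\{1,c\}$, $L(v_2)=\{1,d\}$, $L(w_2)=\{a,b\}$; the three pictures in Figure \ref{1} are then the cases $|\{a,b\}\cap\{c,d\}|=2,1,0$, with no case split on $|C|$ ever made. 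You instead prove an exact partition duality on $C$, classify $H_1,H_2$ by degree counting ($C_3$, $P_4$, $P_3\cup K_2$ according to $|C|=3,4,5$), and force $H_2$ from $H_1$ in each case. I checked your forcing computations: for $|C|=4$ the complements of the three non-edges of $H_1$ pin $H_2$ down to the unique other $P_4$ on $C$; for $|C|=5$ the complements of the two maximum independent sets of $H_1$ (the path $x$--$y$--$z$ plus the edge $\{u,v\}$) force $H_2$ to be the path $u$--$y$--$v$ plus the edge $\{x,z\}$, and the symmetric $3{+}2$ condition then holds; these agree with the paper's families for $|\{a,b\}\cap\{c,d\}|=1$ and $0$. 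Your route is longer, but it buys something the paper leaves implicit: since your criterion is an equivalence, the same computation certifies that the three resulting assignments really \emph{are} bad, whereas the paper's argument only establishes the direction ``bad $\Rightarrow$ one of the three.'' One shared caveat: like the paper, you implicitly assume every list has size exactly $2$ (your $H_i$ are graphs, not hypergraphs), while the paper's definition of a $2$-list assignment only requires $|L(v)|\ge 2$; a fully rigorous treatment should first rule out lists of size at least $3$, e.g.\ by noting that once $1\in L(u_1)\cap L(v_1)\cap L(u_2)\cap L(v_2)$, badness forces each $\alpha\in L(w_1)$ to satisfy $L(u_2)=\{1,\alpha\}$ or $L(v_2)=\{1,\alpha\}$ (as $1\notin L(w_2)$ by Observation \ref{obs-0}), which bounds all list sizes by $2$, and only then does your classification of $H_1,H_2$ apply.
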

	\begin{proof}
		Assume $G=K_{3,3}$ and $L$ is a bad $2$-list assignment of $G$.
		Assume the  two parts of $G$ are $P_i=\{u_i,v_i,w_i\}$ for $i=1,2$. By (2) of Observation \ref{obs-0}, $C_{P_i,3} = \emptyset$ for $i=1,2$. 
		As $|C| \le 5$, we know that $C_{P_1,2}   \ne \emptyset$.
		Assume $1 \in L(u_1) \cap L(v_1)$. If 
		$1 \notin C_{P_2,2}$, then we colour 
		$u_1,v_1$ by colour $1$,  which easily extends to an $L$-colouring of $G$, a contradiction. 
		
		Thus we may assume that  $1 \in L(u_1) \cap L(v_1) \cap L(u_2) \cap L(v_2)$.
		If $L(w_1) \not\subseteq   (L(u_2) \cup L(v_2) )$, then we colour $\{u_1,v_1\}$ by colour $1$, and colour $w_1$ by a colour in $L(w_1) -  (L(u_2) \cup L(v_2) )$, which extends to a proper $L$-colouring of $G$. Thus we must have  $L(w_1) =  (L(u_2) \cup L(v_2) ) -\{1\}$, and by symmetry, $L(w_2)=  (L(u_1) \cup L(v_1) ) -\{1\}$. 
		Thus $L(u_1) = \{1,a\}, L(v_1)= \{1,b\}, L(w_1)=\{c,d\}, L(u_2)=\{1,c\}, L(v_2)=\{1,d\}, L(w_2)=\{a,b\}$, where $a \ne b$ and $c \ne d$. Depending on 
		$|\{a,b\} \cap \{c,d\}|=0,1$ or $2$, we have three bad 2-list assignments for $K_{3,3}$ as depicted in Figure \ref{1}.   
	\end{proof}

	\begin{figure}[htbp]
		\centering
		\includegraphics[width=2in]{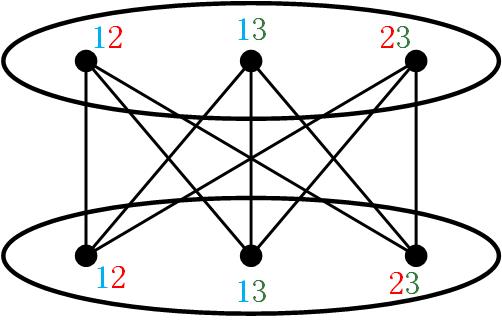}
		\includegraphics[width=2in]{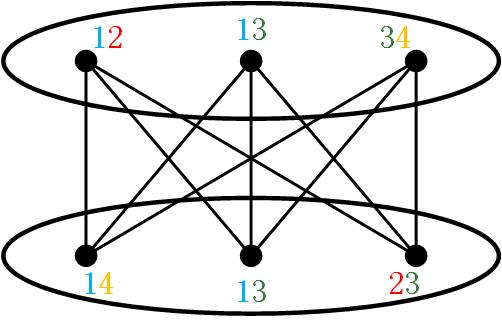}
		\includegraphics[width=2in]{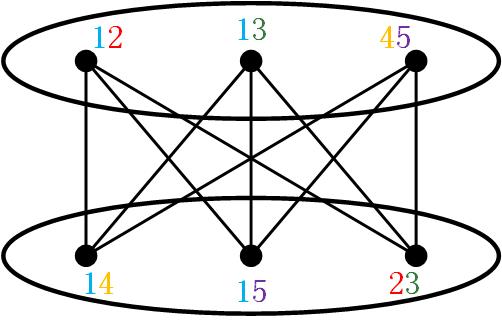}
		\caption{ all bad $2$-list assignments of $K_{3,3}$}
		\label{1}
	\end{figure}

	\begin{theorem}
		\label{unique3}
		Suppose $k\ge 4$ is an even integer and   $L$ is the $k$-list assignment of $G$ with  $|C|>3k/2$. Then $G$ is $L$-colourable. 
	\end{theorem}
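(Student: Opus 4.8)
The plan is to argue by contradiction. Suppose $G$ is not $L$-colourable; then $L$ is a bad $k$-list assignment and the whole apparatus of Section~2 becomes available. In particular Observation~\ref{obs-0}(2) gives $\bigcap_{v\in P}L(v)=\emptyset$ for every $3$-part $P$, the opening remark of Section~2 gives $|C|\le |V(G)|-1=2k+1$, and the hypothesis gives $|C|>3k/2$; so we work throughout in the narrow window $3k/2<|C|\le 2k+1$. A first useful consequence is an inclusion--exclusion bound inside each $3$-part $P=\{a,b,c\}$: since the triple intersection is empty,
\[
|L(a)\cap L(b)|+|L(a)\cap L(c)|+|L(b)\cap L(c)|=\sum_{v\in P}|L(v)|-|L(P)|\ge 3k-|C|\ge k-1 ,
\]
so each $3$-part has a ``heavy'' pair and in fact $t_P\ge\lceil (k-1)/3\rceil>0$. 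This positivity is what will let us merge profitably.

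The main step is to take the partition $\mS$ that merges one heavy pair in \emph{each} $3$-part. Concretely, in each $3$-part pick a pair $\{x_j,y_j\}$ attaining $t_{P_j}$, declare $S_j=\{x_j,y_j\}$ a non-singleton part of $\mS$, and leave the third vertex $z_j$ and every $1$-part as singletons. Then $\GS=K_{2\star(k/2+1),\,1\star(k/2-1)}$ is a complete multipartite graph on $3k/2+1$ vertices with all parts of size at most $2$, and $\LS(v_{S_j})=L(x_j)\cap L(y_j)$ has size $t_{P_j}$ while every other list has size $\ge k$. A matching of $\BS$ covering $V(\GS)$ would use $3k/2+1$ distinct colours and induce a proper $L$-colouring of $G$; since $G$ is not $L$-colourable no such matching exists, so by Hall there is a maximum $\XS\subseteq V(\GS)$ with $|\XS|>|\YS|$, where $\YS=\LS(\XS)$. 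The goal is then to show that such a deficient $\XS$ cannot exist unless $|C|\le 3k/2$, contradicting the hypothesis.

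The cheap direction mirrors the $\XS$-analysis in the proof of Theorem~\ref{unique4}. Any solo vertex $z_j$ or $1$-part vertex lying in $\XS$ contributes a list of size $\ge k$, so once $\XS$ meets these vertices we already have $|\YS|\ge k$ and hence $|\XS|\ge k+1$; and a short counting argument (again as in Theorem~\ref{unique4}, using $|\YS|\le|C|$) rules out an $\XS$ that is too large or that swallows whole parts of $\GS$. This confines $\XS$ to a bounded list of shapes, distinguished by how many merged vertices $v_{S_j}$ it contains against how many solo/singleton vertices. In each shape one lower-bounds $|\YS|=|\bigcup_{v\in\XS}\LS(v)|$: the solo and singleton contributions are large outright, while the merged contributions are tracked through the counts $C_{P,i}$ and the inclusion--exclusion above, using that each $\LS(v_{S_j})$ is an intersection of two size-$\ge k$ lists inside a ground set of size $\le 2k+1$. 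The target in every shape is $|\YS|\ge|\XS|$, which is the sought contradiction.

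The step I expect to be the real obstacle is the case in which $\XS$ consists mostly of merged vertices $v_{S_j}$, whose lists have size only $t_{P_j}\ge\lceil(k-1)/3\rceil$ --- far below the $k/2+1$ that a crude size count (for instance a direct appeal to Lemma~\ref{ind3} with $k_3=0$, where conditions \eqref{invK:b-1}--\eqref{invK:b-2} both reduce to $t_{P_j}\ge k/2+1$) would demand. Here a naive inequality fails and one must genuinely use $|C|>3k/2$: the extra colour is precisely what prevents the $k/2+1$ merged lists from all crowding into a common block of fewer than $|\XS|$ colours, so that $|\YS|$ stays at least $|\XS|$; one must also control the fact that merging can lose colours of $C$ lying in no solo list and in no heavy-pair intersection. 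Making this quantitative --- showing that the only way to defeat Hall would force every $3$-part to realise the tight extremal pattern of the $|C|=3k/2$ bad assignments --- is the crux. If a fixed choice of heavy pairs is insufficient in some configuration, the remedy is to re-select which vertex of each offending $3$-part is left solo and re-run the Hall argument with the amended $\mS$, iterating until no deficient set survives.
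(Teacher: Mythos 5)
Your overall framework (merge a heavy pair inside each $3$-part, then run Hall's theorem on $\BS$) is the same as the paper's, but your plan has a genuine gap at exactly the point you flag as the crux, and the remedy you sketch (iterated re-selection) is not an argument. If you merge a pair attaining $t_{P_j}$ in \emph{every} $3$-part, the merged lists are only guaranteed to have size $t_{P_j}\ge\lceil (k-1)/3\rceil$, and nothing in your proposal prevents all $k/2+1$ of these pairwise intersections from lying inside a common set of only $k/2$ colours; in that situation $\XS=\{v_{S_1},\dots,v_{S_{k/2+1}}\}$ is deficient, contains no vertex with a list of size $\ge k$, and the entire counting scheme (which is bootstrapped by $|\YS|\ge k$ coming from a full-list vertex of $\XS$, forcing $|\XS|\ge k+1$) never gets started. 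Asserting that ``the extra colour prevents crowding'' is precisely the statement that needs proof, and the proposed fix --- re-select which vertex of each offending part stays solo and ``iterate until no deficient set survives'' --- comes with no progress or termination argument; it is a restatement of the goal, not a proof.

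The paper's key idea, absent from your proposal, is to \emph{not} merge a pair in every $3$-part. It orders the $3$-parts as $\pi=(P_1,\dots,P_{k/2+1})$ and merges pairs only in the prefix $P_1,\dots,P_{\ell(\pi)}$, where $\ell(\pi)=\max\{i: t_{P_j}\ge j,\ \forall j\le i\}$ is the longest prefix satisfying the staircase condition $t_{P_j}\ge j$. This ensures that a deficient set cannot consist of merged vertices alone: if $i$ is the largest index with $v_{S_i}\in\XS$, then $|\YS|\ge t_{P_i}\ge i\ge|\XS|$, which is exactly the opening step of Claim \ref{clm-33}. Hence $\XS$ must contain a full-list vertex, $|\XS|\ge k+1$, and the counting gets traction. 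The remainder of the paper's proof exploits extremal choices (maximize $\ell(\pi)$, then $|L(P_{k/2+1})|$, then $|\LS(Z)|$) to pin down $\ell(\pi)=k/2$, $t_{P_i}=k/2$ for all $i$, and $|V(\GS)-\XS|\le 1$, and then reaches contradictions in two cases by swapping in specific modified partitions $\mS'$ --- each swap justified by a concrete maximality property, not by open-ended iteration. Without the staircase/prefix device (or some substitute that handles parts with small $t_{P_j}$), your outline cannot be completed.
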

	\begin{proof}
		For an ordering   $\pi=(P_1, P_2, \ldots, P_{k/2+1})$ of the 3-parts, let
		
		$$\ell(\pi) = \max\{i: t_{P_j} \ge j,   \forall j \le i\}.$$
		
		Let $\pi$ be an ordering of the 3-parts   such that 
		\begin{itemize}
			\item[(1)] $\ell(\pi)$ is maximum.
			
			\item[(2)] Subject to (1), $|  L(P_{k/2+1})|$ is maximum.
		\end{itemize} 
		
		
		For $1 \le i \le \ell(\pi)$, choose a 2-subset $S_i$ of $P_i$ with $\bigcap_{v \in S_i}L(v) = t_{P_i}$. 
		Let $\mS$ be the partition of $V(G)$ whose non-singleton parts are $S_1,S_2, \ldots, S_{\ell(\pi)}$. Let $\GS, \LS, \XS, \YS$ be as defined in Section 2. Similarly, let $P'_i$ be the parts of $\GS$.
		
		Let $  Z=\{v_{S_i}: 1 \le i \le \ell(\pi)\}.$
		Subject to the condition that $\bigcap_{v \in S_i}L(v) = t_{P_i}$, we choose $S_i$ 
		so that $|\LS(Z)|$ is maximum.

		\begin{claim}
			\label{clm-33}
			$\XS -Z \ne \emptyset$, $\ell(\pi)\le k/2$ and 	$|\XS \cap P_i| \ge 2$ for some $\ell(\pi)+1 \le i \le k/2+1$.
		\end{claim}
		\begin{proof}
			If $\XS -Z = \emptyset$, then let $i$ be the maximum index such that $v_{S_i}\in \XS$, then   $|\YS|\ge i\ge |\XS|$, a contradiction.
			
			So  $|\YS| \ge |L(v)| = k$ for $v \in \XS -Z$. Hence $|\XS| \ge k+1$. 
			Thus for some  part $P'_i$ of $\GS$, $|P'_i \cap \XS|\ge 2$.
			
			If 	$|\XS \cap P_i| \le 1$ for each $i \ge \ell(\pi)+1$, then 
			let $i$ be the maximum index such that $P'_i \subseteq \XS$. Then $|\XS| \le k+i$. But 
			$|\YS| \ge |\LS(P'_i)| \ge k+i$, a contradiction. Thus $\ell(\pi) \le k/2$ and 	$|\XS \cap P_i| \ge 2$ for some $\ell(\pi)+1 \le i \le k/2+1$.
		\end{proof}
		
		\begin{claim}
			\label{clm-ell}
			$|\YS| \ge 3k/2$, $|V(\GS)-\XS| \le 1$ and  $\ell(\pi)= t_{P_{k/2+1}}=k/2$.
		\end{claim}
		\begin{proof}
			By Claim \ref{clm-33}, there exists 	 $\ell(\pi)+1 \le i \le k/2+1$ such that 
			$|\XS \cap P_i| \ge 2$. Assume 
			$x,y  \in \XS \cap P_i$. 
			Then
			\begin{equation}
			\label{eq-9}
			|\YS| \ge |L(x )\cup L( y)| \ge 2k- t_{P_i} \ge 2k-\ell(\pi) \ge 3k/2.
			\end{equation}   
			Hence
			\begin{equation*}
			2k+2   -  \ell(\pi) =|V(\GS)| \ge |\XS| \ge 2k- \ell(\pi)+1.
			\end{equation*}  
			
			So   $|\XS| \ge 3k/2+1$. This implies that either     $\XS=\GS$ or $\XS=\GS-v^*$ for some vertex $v^*$. 
			
			If $\ell(\pi)\le k/2-1$, then $\XS$ contains a 3-part $P_i$, where $i \ge \ell(\pi)+1$. By the maximality of $\ell(\pi)$, we have $|L(x )\cap L( y)| \le \ell(\pi)$ for any $x,y \in P_i$. 
			Hence
			$$3\ell(\pi)\ge|C_{P_i,2}|\ge 3k-|L(P_i)|\ge 3k-|\YS| \ge k+\ell(\pi)-1,$$
			a contradiction. Hence $\ell(\pi) \ge k/2$, and by Claim \ref{clm-33}, we have $
			\ell(\pi)=k/2$.
			
			Now we show that 
			$  t_{P_{k/2+1}} =  k/2$. It follows from the definition of $
			\ell(\pi)$ that $t_{P_{k/2+1}} \le k/2$. Assume to the contrary that $t_{P_{k/2+1}} \le k/2 -1$. Then $|\YS| \ge |L(x) \cup L(y)| \ge 2k-t_{P_{k/2+1}} \ge 3k/2+1$. Hence $|\XS| \ge 3k/2+2$, which implies that $\XS = V(\GS)$. In particular, $P_{k/2+1} \subseteq \XS$ and hence $|\YS| \ge |L(P_{k/2+1})| \ge 3k-3t_{P_{k/2+1}} \ge 3k/2+3 > |\XS|$, a contradiction. 
		\end{proof}
		
		\begin{corollary}
			\label{cor-ell}
			For $i=1,2,\ldots, k/2+1$, $t_{P_i} = k/2$.
		\end{corollary}
		\begin{proof}
			Interchange the positions of $P_i$ and $P_{k/2+1}$, we obtain an ordering $
			\pi'$ of the parts of $G$ with $\ell(\pi') = k/2$. Apply Claims \ref{clm-33} and \ref{clm-ell} to $\pi'$, we conclude that $t_{P_i} = k/2$. (Note that the proofs of these two claims only used the fact that $\pi$ is an ordering with $\ell(\pi)$ maximum among all orderings of the parts of $G$.)
		\end{proof}

		In the following, we assume that 
		$\XS=\GS$ or $\XS=\GS-v^*$ for some vertex $v^*$. Note that for any $i \le k/2$, if $\pi'$ is the ordering of the parts of $G$ obtained from $\pi$ by  interchanging the position of $P_i$ and $P_{k/2+1}$, we have $\ell(\pi') = k/2 = \ell(\pi)$. Thus by the choice of $\pi$, we have
		\begin{equation}
		\label{eqn-lp}
		|L(P_i)| \le |L(P_{k/2+1})|, \forall i \le k/2.
		\end{equation}  
		\begin{claim}
			\label{samecolour}
			If $P_{k/2+1} \subseteq \XS$, then $|\YS| > 3k/2$
		\end{claim}
		\begin{proof}
			Assume to the contrary that $P_{k/2+1}\subseteq \XS$ and  $|\YS| = 3k/2$.
			
			For $i=1,2,\ldots, k/2+1$, $$2|L(P_i)|\ge |C_{P_{i},1}|+2|C_{P_{i},2}|=\sum_{v\in P_{i}}|L(v)|\ge 3k$$ and hence $|L(P_i)|\ge 3k/2$. Combined with $|L_{P_{k/2+1}}|\le |\YS|=3k/2$, we have $|L(P_{k/2+1})|=3k/2$. 
			
			By (\ref{eqn-lp}), for any  $j \le k/2$,   $|L(P_j)| = 3k/2$. This  implies that 
			$L(P_j)=\LS(P'_j)$. 
			
			Assume $v^* \in P_i$ (if $v^*$ exists). Then $C = \bigcup_{v \in V(G)-P_i}L(v) \subseteq \YS$. This is a contradiction, as $|C| > 3k/2$.   
		\end{proof}

		In the remaining part of the  proof, we consider two cases.

		\medskip\noindent
		{\bf Case 1}:   $\XS=\GS-v^*$.

		In this case, $|\XS|=3k/2+1$ and $|\YS|=3k/2$. 
		By Claim \ref{samecolour}, we conclude that $P_{k/2+1} \not\subseteq \XS$ and hence  $v^* \in P_{k/2+1}$. By the maximality of $\XS$, we know that 
		\begin{equation}
		\label{eq-v^*}
		|L(v^*)-\YS|\ge 2.
		\end{equation}
		
		Assume $P_{k/2+1} = \{ u_{k/2+1}, v_{k/2+1}, w_{k/2+1}\}$ and $v^*=v_{k/2+1}$. 
		Since $|L(u_{k/2+1} )\cup L( w_{k/2+1})|\le |\YS|= 3k/2$, we have $|L(u_{k/2+1} )\cap L( w_{k/2+1})| \ge k/2$. As $\ell(\pi)=k/2$, we have  $|L(u_{k/2+1} )\cap L( w_{k/2+1})|= k/2$.
		
		As $2|P_{1,2}|+|P_{1,1}| \ge3k$ and $|P_{1,2}|+|P_{1,1}| \le |C| \le 2k+1$, we have $|P_{1,2}| \ge k-1 > k/2$. Hence there is a 2-subset $S'_1$ of $P_1$ such that $$\bigcap_{v \in S'_1} L(v) \not\subseteq L(u_{k/2+1} )\cap L( w_{k/2+1}).$$
		
		Let $S_{k/2+1} = \{u_{k/2+1} , w_{k/2+1} \}$ and let
		$\mS' = (\mS - \{S_1\}) \cup \{S'_1, S_{k/2+1}\}$.
		
		Let $P''_i$ be the part of $G/\mathcal{S}'$ and let $Z'=(Z- \{v_{S_1}\}) \cup \{v_{S'_1}, v_{S_{k/2+1}}\}$.

		We have $$X_{\mathcal{S'}}-Z'\neq \emptyset,$$
		for otherwise, if $|X_{\mathcal{S'}}\cap Z'|\le k/2$, then  $|X_{\mathcal{S'}}| \le k/2\le |Y_{\mathcal{S'}}| $  (as it is obvious that $|\XS| \ge 2$), a   contradiction, and  if   $X_{\mathcal{S'}}=Z'$, then 
		$|X_{\mathcal{S'}}|= k/2+1 \le |L_{\mathcal{S'}}(v_{S'_1}) \cup L_{\mathcal{S'}}(v_{S_{k/2+1}})| \le |Y_{\mathcal{S'}}|$,  
		a contradiction.
		
		Hence, there is a vertex $v\in X_{\mathcal{S'}}$ such that $|L_{\mathcal{S'}}(v)|\ge k$ and this implies that $|X_{\mathcal{S'}}|\ge k+1$. So, we know that there exists an index $1\le j\le k/2+1$ such that $P''_j\subseteq X_{\mathcal{S'}}$. 
		
		This implies that $|Y_{\mathcal{S'}}|\ge k+1$, and hence  $|X_{\mathcal{S'}}| \ge k+2$. This in turn implies that  there exists an index $2 \le j\le k/2+1$ such that $P''_j\subseteq X_{\mathcal{S'}}$. 
		This implies that  $|Y_{\mathcal{S'}}|\ge
		3k/2$. Hence $|X_{\mathcal{S'}}|= 3k/2+1$, i.e., $X_{\mathcal{S'}}=V(	G/\mathcal{S}')$ and $|Y_{\mathcal{S'}}|= 3k/2$.
		But we know that $|Y_{\mathcal{S'}}|\ge |\YS \cup L(v^*)|\ge 3k/2+2$ (by (\ref{eq-v^*})), a contradiction.

		\medskip\noindent
		{\bf Case 2}: $\XS=V(\GS)$

		In this case, $|\XS|=3k/2+2$. In particular, 
		\begin{equation}
		\label{eq-p}
		P_{k/2+1} \subseteq \XS, L(P_{k/2+1}) \subseteq \YS.
		\end{equation} 
		By  Claim \ref{samecolour},   $|\YS|= 3k/2+1$.

		\begin{claim}
			\label{clm-pi}
			$|\LS(Z)| \ge k/2+1$ and  $ L(P_i) \subseteq \YS$ for $1\le i\le k/2+1$. 
		\end{claim}
		\begin{proof}
			By Corollary \ref{cor-ell},  $t_{P_i}  =k/2$ for all 
			$1 \le i \le k/2+1$.  As  $|L(P_{k/2+1})| \le |\YS| =3k/2+1$, we have 
			$3k/2+1 \ge |L(P_{k/2+1})|  \ge |L(P_i)|$. This implies  that $P_i$ has at least two 2-subsets $S_i$ with $|\bigcap_{v\in S_i}L(v)|=k/2$. As $\ell(\pi) \ge 2$, we can make a choice of $S_2$ so that $\bigcap_{v\in S_1}L(v) \ne \bigcap_{v\in S_2}L(v)$. Hence $$|\LS(Z)| \ge |\bigcap_{v\in S_1}L(v) \cup \bigcap_{v\in S_2}L(v)| \ge k/2+1.$$

			If $ |L(P_{k/2+1})|=3k/2$, then $|L(P_i)|=|\LS(P'_i)|=3k/2$ for all $1 \le i \le k/2$,  and hence $L(P_i) = \LS(P'_i) \subseteq \YS$, and the claim is true.  
			
			Assume $|L(P_{k/2+1})|=3k/2+1$. Then $\YS=L(P_{k/2+1})$.
			
			Assume to the contrary that 
			$L(P_i) \not\subseteq \YS$ for some $1 \le i \le k/2$.
			Since $t_{P_i} = k/2$ for $1 \le i \le k/2$, by a re-ordering of the parts of $G$ if needed, we may assume that   $P_1=\{u_1,v_1,w_1\}$ and $L(u_1) - \YS \ne\emptyset$ (the resulting ordering $\pi'$ will still have $\ell(\pi')=k/2$). Let $S'_1=\{v_1,w_1\}$ and 
			$\mS' = (\mS -\{S_1\}) \cup \{S'_1\}$.   The argument remains   valid for $X_{\mathcal{S}'}$.  
			Hence $|X_{\mathcal{S}'}|=|V(G/\mathcal{S}')|= 3k/2+2$.	
			However, $\YS \cup (L(u_1) - \YS) \subseteq Y_{\mathcal{S}'}   $. Hence $|Y_{\mathcal{S}'}| \ge |\YS|+1 = 3k/2+2$, a contradiction. 
		\end{proof}

		By (2) of Observation \ref{obs-0}, $C= \bigcup_{i  \in [k] - \{ k/2+1\}} L(P_i)$.
		By Claim \ref{clm-pi}, $  \bigcup_{i \in [k] - \{k/2+1\}} L(P_i) \subseteq  \YS$. 
		
		Let $S_{k/2+1}$ be a 2-subset of $P_{k/2+1}$  with $\cap_{v \in S_{k/2+1}} L(v) = k/2$, and let  $\mS'=\mS \cup \{S_{k/2+1}\}$.
		By using the fact that $|L_{\mathcal{S'}}(Z)| =|\LS(Z)| \ge k/2+1$, it is easy to show that   $X_{\mathcal{S'}}= V(\GS')$, and hence  $|Y_{\mathcal{S'}}| \le |\YS| = 3k/2+1 =|X_{\mathcal{S'}}|$, a contradiction. 

	This completes the proof of Theorem \ref{unique3}.	

\end{proof}

\section{Characterization of non-$k$-choosable $k$-chromatic graphs with $2k+2$-vertices}

This section proves Corollary \ref{col-1}. By Theorem \ref{thm-zz}, it suffices to consider   proper subgraphs of $K_{3\star(k/2+1),1\star(k/2-1)}$ and $K_{4,2\star(k-1)}$, where $k$ is an even integer.

Let $G^*$ be the subgraph of $K_{4, 2\star (k-1)}$ obtained by deleting all the edges  $\{u_iv_j: 2 \le i,j \le k\}$.   In other words, $G^*=\overline{K_4}  \vee    (2k_{k-1}) $ is the join of   $\overline{K_4}$, an independent set of size $4$,  and $2K_{k-1}$, which  is the disjoint union of two copies of $K_{k-1}$. It is easy to verify that $G^*$ is not $k$-choosable. Indeed, let
$L$ be the $k$-list assignment of $G^*$ as described in Theorem \ref{unique4}. Assume $f$ is a proper $L$-colouring of $G^*$.  Then $|f(\{u_2,u_3,\ldots, u_k\})|=|f(\{v_2,v_3,\ldots, v_k\})|= k-1$. Hence  there is only one   colour from $A$, and only one colour from $B$,  that are left for vertices in $P_1$.  No matter what are the colours from $A$ and $B$ are left, the list of   one of the vertices of $P_1$ contains none of these two colours, and hence cannot be properly coloured by a colour from its list. 

Thus any subgraph of $K_{4, 2\star (k-1)}$ containing a copy of $G^*$ is not $k$-choosable. The following lemma shows that any other subgraph of $K_{4, 2\star (k-1)}$ is $k$-choosable. 

\begin{theorem}
	\label{thm-4proper}
	Assume $G$ is a subgraph of $K_{4,2\star(k-1)}$ which does not contain a copy of $G^*$. Then $G$ is $k$-choosable. 
\end{theorem}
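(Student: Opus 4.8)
The plan is to argue by contradiction, reducing to the list assignments classified in Theorem \ref{unique4}. Suppose $G\subseteq K:=K_{4,2\star(k-1)}$ is not $k$-choosable and let $L$ be a bad $k$-list assignment of $G$. Since $G$ and $K$ have the same vertex set and $E(G)\subseteq E(K)$, every proper $L$-colouring of $K$ restricts to a proper $L$-colouring of $G$; hence $L$ is also a bad $k$-list assignment of $K$, so Theorem \ref{unique4} applies. Write $C=A\cup B$ with $A=A_1\cup A_2\cup A_3\cup A_4$, $B=B_1\cup B_2$, the lists $L(u_1)=A_1\cup A_3\cup B_1$, $L(v_1)=A_1\cup A_4\cup B_2$, $L(x_1)=A_2\cup A_4\cup B_1$, $L(y_1)=A_2\cup A_3\cup B_2$, and $L(u_i)=A$, $L(v_i)=B$ for $i\ge 2$; note $A\cap B=\emptyset$ and $|B_1|=|B_2|=k/2\ge 2$. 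Because a clique of $K$ meets each part in at most one vertex, any copy of $G^*$ inside $G$ must use $P_1$ as its independent $4$-set, so it suffices to exhibit the \emph{standard} copy, i.e.\ to prove that (i) $\{u_2,\dots,u_k\}$ and $\{v_2,\dots,v_k\}$ are cliques of $G$, and (ii) every vertex of $P_1$ is $G$-adjacent to every $u_i,v_i$ $(i\ge 2)$. Deriving (i) and (ii) from the non-$L$-colourability of $G$ yields the desired contradiction with the hypothesis that $G$ contains no copy of $G^*$.

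For (i), I would show that if $H_u:=G[\{u_2,\dots,u_k\}]$ is not complete then $G$ is $L$-colourable. Since $H_u$ is a proper subgraph of $K_{k-1}$ it is $(k-2)$-colourable, so using the full lists $A$ we may colour the $u_i$ $(i\ge 2)$ with only $k-2$ colours; as these lists are all equal to $A$, the two unused colours may be prescribed to lie in $A_1$ and $A_2$ (or in $A_3$ and $A_4$ when $A_1=A_2=\emptyset$). A direct check against the four lists of $P_1$ shows that these two colours already suffice to colour all of $P_1$; colouring the $v_i$ $(i\ge 2)$ with distinct colours of $B$, which is disjoint from $A$, then completes an $L$-colouring, a contradiction. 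The symmetric argument, placing two free colours in $B_1$ and $B_2$, handles $H_v$. Hence both transversals $\{u_i\}$ and $\{v_i\}$ are cliques.

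For (ii), suppose some join edge is missing. The four lists of $P_1$ realise the four even-weight vectors of $\{0,1\}^3$ (recording membership in $A_1/A_2$, $A_3/A_4$, $B_1/B_2$), and the colour-permuting automorphisms of this structure act transitively on $P_1$, so I may assume the missing edge is incident to $u_1$; relabelling the parts $P_2,\dots,P_k$ reduces it to $u_1u_2$ or $u_1v_2$. In either case colour the $u_i$ (resp.\ $v_i$) with distinct colours, leaving one free colour $\alpha^*\in A$ and one free colour $\beta^*\in B$; since $u_1$ is not adjacent to $u_2$ (resp.\ $v_2$), the colour of $u_2$ (resp.\ $v_2$) becomes an \emph{extra} admissible colour for $u_1$, which I place into $L(u_1)$ by choosing it in $A_1$ (resp.\ $B_1$). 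Taking $\alpha^*\in A_2$, $\beta^*\in B_2$ (and $\alpha^*\in A_4$, $\beta^*\in B_2$ in the degenerate case $A_1=A_2=\emptyset$) colours $v_1,x_1,y_1$, while the extra colour colours $u_1$. This precisely defeats the obstruction that makes $G^*$ itself non-$L$-colourable, producing an $L$-colouring of $G$, a contradiction; hence every join edge is present.

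The reduction and the clique step (i) are routine, and I expect the join step (ii) to be the main obstacle: it requires tracking how the single extra colour interacts with the rigid block structure of the $P_1$-lists and disposing of the degenerate cases $A_1=A_2=\emptyset$ (the simultaneous absence of several join edges only makes the colouring easier and can be absorbed). Once (i) and (ii) hold, $G$ contains $\overline{K_4}\vee 2K_{k-1}=G^*$ with $\overline{K_4}=P_1$ and the two copies of $K_{k-1}$ equal to the cliques $\{u_2,\dots,u_k\}$ and $\{v_2,\dots,v_k\}$, contradicting the hypothesis. Therefore $G$ is $k$-choosable.
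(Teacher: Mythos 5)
Your proposal is correct, and it shares the paper's skeleton: pass to a bad $k$-list assignment $L$ of $G$, note that $L$ is then also bad for $K_{4,2\star(k-1)}$, invoke Theorem~\ref{unique4}, and show that each missing edge of the would-be copy of $G^*$ permits an $L$-colouring. Where you genuinely diverge is in how those colourings are produced. For the clique step the paper identifies two non-adjacent vertices $u_i,u_j$ (which have the same list $A$) and appeals to the Noel--Reed--Wu theorem; for a missing join edge it colours $u_1,u_2,v_1,y_1$ with two colours, deletes them, and invokes Lemma~\ref{ind3}. You instead finish both colourings by hand, exploiting $A\cap B=\emptyset$ and the freedom to prescribe which colours of $A$ (resp.\ $B$) the transversals $\{u_2,\dots,u_k\}$, $\{v_2,\dots,v_k\}$ leave unused. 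Your route is more elementary and self-contained (no Lemma~\ref{ind3}, and Noel--Reed--Wu enters only through Theorem~\ref{unique4}), and it makes the mechanism transparent: the two leftover colours cannot cover all four lists on $P_1$ when every join edge is present, but one missing edge frees exactly the extra admissible colour needed. It also treats the missing $u_1v_2$-edge case explicitly, which the paper compresses into a ``by symmetry'' that is not literal, since no automorphism of the list structure exchanges $A$ and $B$ when $A_3,A_4\neq\emptyset$. The price is your block-by-block case analysis, which contains one small slip: in the degenerate case $A_1=A_2=\emptyset$, the colour given to $u_2$ must be chosen in $A_3=L(u_1)\cap A$ rather than in $A_1$; with that one-word repair, and with the usual convention that one may restrict attention to spanning subgraphs (non-spanning subgraphs sit inside $k$-choosable complete multipartite graphs on $2k+1$ vertices), your argument is complete.
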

\begin{proof}
	Assume $L$ is a $k$-list assignment of $G$.
	We may assume $L$ is the $k$-list assignment as described in Theorem \ref{unique4}, for otherwise, $K_{4,2\star(k-1)}$ is $L$-colourable, and hence
	$G$ is $L$-colourable. If there are $2 \le i < j \le k$ such that $u_iu_j$ is not an edge of $G$, then since $L(u_i) = L(u_j)$, we can identify $u_i$ and $u_j$ into a single vertex. It follows from Noel-Reed-Wu Theorem that $G$ is $L$-colourable. Thus we may assume that $\{u_2,u_3, \ldots, u_k\}$ induces a clique. Similarly, $\{v_2,v_3, \ldots, v_k\}$ induces a clique.
	
	As $G$ does not contain $G^*$ as a subgraph, we know that $uv \notin E(G)$ for some $u \in P_1$ and $v \in P_i$ for some $2 \le i \le k$. By symmetry, assume $u_1u_2 \notin E(G)$. We colour $u_1,u_2$ by a colour $c \in L(u_1 )\cap L( u_2) \subseteq A$ (which exists by the description of $L$ in Theorem \ref{unique4}), and colour $v_1$ and $y_1$ by a colour   $c' \in B_2$.

	Let $G'=G-\{u_1,u_2, v_1, y_1\}$ and let $L'(v)=L(v)-\{c,c'\}$ for $v \in V(G')$.  
	It easy to verify that $G'$ and $L'$ satisfies the conditions of Lemma \ref{ind3} and hence $G'$ is $L'$-colourable, and hence $G$ is $L$-colourable.
\end{proof}

For $k=2$, it is easy to verify (and also follows from the characterization of 2-choosable graphs in \cite{ERT1980}) that, up to isomorphism, $K_{3,3}$ has two proper subgraphs that are not 2-choosable.
In the following, we show that for $k \ge 4$, all proper subgraph of $K_{3\star (k/2+1), 1\star(k/2-1)}$ is $k$-choosable.

\begin{theorem}
	\label{thm-3proper}
	If $k \ge 4$ and $G$ is a proper  subgraph of $K_{3\star (k/2+1), 1\star(k/2-1)}$, then $G$ is $k$-choosable.
\end{theorem}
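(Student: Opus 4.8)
The plan is to reduce to maximal proper subgraphs and then exploit the single missing edge. Write $H = K_{3\star(k/2+1),1\star(k/2-1)}$. Since $k$-choosability is monotone under taking subgraphs, every proper subgraph is contained in some $H-e$, so it suffices to prove that $G = H - uv$ is $k$-choosable for each edge $uv$ of $H$; say $u$ lies in a part $P$ and $v$ in a part $Q$ with $P \ne Q$. Fix a $k$-list assignment $L$ of $G$ and let $C = L(V(H))$. If $H$ is $L$-colourable we are done, so assume not; then $L$ is a bad list assignment of $H$, and by Observation \ref{obs-0} together with Theorem \ref{unique3} we have $|C| = 3k/2$ and $\bigcap_{w\in R}L(w)=\emptyset$ for every $3$-part $R$. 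Because every list has size $\ge k$ and lies inside $C$, any two lists meet in at least $2k - |C| = k/2 \ge 2$ colours; in particular $L(u)\cap L(v)\neq\emptyset$.

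The mechanism is to use the non-edge $uv$ to colour $u$ and $v$ with a common colour $c \in L(u)\cap L(v)$. In $H$ the vertices $u,v$ dominate everything outside $P\cup Q$, so no other vertex may reuse $c$, and what remains is to $L'$-colour $H'' = H - \{u,v\}$, where $L'(w) = L(w)\setminus\{c\}$ has size $\ge k-1$. I would split on the sizes of $P$ and $Q$. If exactly one of them, say $Q$, is a singleton, then deleting $v$ removes a whole part, so $H''$ is a complete $(k-1)$-partite graph on $2k = 2(k-1)+2$ vertices; since $k-1$ is odd, Theorem \ref{thm-zz} leaves no non-$(k-1)$-choosable complete $(k-1)$-partite graph on $2(k-1)+2$ vertices, so $H''$ is $(k-1)$-choosable and hence $L'$-colourable. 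If both $P$ and $Q$ are singletons, then $H'' = K_{3\star(k/2+1),1\star(k/2-3)}$, and I would verify the hypotheses of Lemma \ref{ind3} with $f \equiv k-1$, placing all singleton parts in class $\mathcal{A}$; here the binding inequalities are (a-1) and (c-3), the latter holding with equality, and this subcase is vacuous for $k=4$.

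The remaining and genuinely hard case is when $P$ and $Q$ are both $3$-parts, for then $H'' = K_{3\star(k/2-1),2\star 2,1\star(k/2-1)}$ still has chromatic number $k$ and the colour budget is exactly tight: a proper $L'$-colouring needs $2(k/2-1)+2+(k/2-1)=3k/2-1$ colours while only $|C\setminus\{c\}| = 3k/2-1$ are available, so both new $2$-parts must be monochromatic and every $3$-part must use exactly two colours. Lemma \ref{ind3} fails here (condition (b-2) for the two $2$-parts is short by two), so I would instead mirror the matching argument of Theorem \ref{unique3}. Concretely, take the partition $\mathcal{S}$ of $V(G)$ whose non-singleton parts are $\{u,v\}$, the two pairs $P\setminus\{u\}$ and $Q\setminus\{v\}$, and a maximum-overlap $2$-subset of each remaining $3$-part, and form $\GS$ and $\BS$; then $|V(\GS)| = 3k/2 = |C|$, so a matching of $\BS$ saturating $V(\GS)$ is a perfect matching and induces an $L$-colouring of $G$. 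If no such matching exists, Hall's theorem produces $\XS$ with $|\XS| > |\YS|$, and I would derive a contradiction exactly as in Claims \ref{clm-33}--\ref{clm-pi}, using $|L(x)\cap L(y)|\ge k/2$ and the counts of $C_{R,2}$.

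The main obstacle is this last case: the instance sits on the Noel--Reed--Wu / Theorem \ref{thm-zz} boundary with shortened lists, so no off-the-shelf choosability theorem applies, and one must show that the \emph{single} colour saved by the non-edge $uv$ exactly balances $|V(\GS)|$ against $|C|$. The delicate point is that the maximum-overlap pairs of the $3$-parts, the merge $\{u,v\}$ carrying list $L(u)\cap L(v)$, and the two forced-monochromatic pairs can all be matched to \emph{distinct} colours simultaneously; verifying Hall's condition for $\BS$ is where the structural fact $|C|=3k/2$ (hence pairwise intersections of size $\ge k/2$) must be pushed through, in close parallel with the $\XS$/$\YS$ analysis already carried out for Theorem \ref{unique3}.
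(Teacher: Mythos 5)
Your reduction to $G=H-uv$ and your treatment of the cases where $uv$ meets a singleton part are correct, and they take a slightly different (equally valid) route from the paper: you colour $u,v$ with a common colour and invoke Theorem \ref{thm-zz} (resp.\ Lemma \ref{ind3} with $f\equiv k-1$), whereas the paper simply observes that such a $G$ is a subgraph of $K_{3\star k/2,\,2\star 2,\,1\star(k/2-2)}$ and applies Theorem \ref{thm-zz} directly. The problem is the case you yourself identify as the hard one: $u,v$ in two $3$-parts. There your argument has a genuine gap, not just a missing verification.

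Two things go wrong. First, Claims \ref{clm-33}--\ref{clm-pi} cannot be invoked ``exactly'': their proofs derive contradictions from the hypothesis $|C|>3k/2$ of Theorem \ref{unique3}, while in your situation $|C|=3k/2$, so the arithmetic in those claims does not transfer. Second, and more fundamentally, Hall's condition for your $\BS$ can genuinely \emph{fail}. When $|C|=3k/2$, for every $3$-part $R$ one has $C_{R,2}=C$ (each colour lies in exactly two of the three lists), so the three pairwise intersections inside $R$ partition $C$ into three sets of size exactly $k/2$; in particular \emph{all three} $2$-subsets of $R$ are ``maximum-overlap'', and your selection rule selects nothing. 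Now take the canonical bad assignment: $C=A\sqcup B\sqcup D$ with $|A|=|B|=|D|=k/2$, each $3$-part $\{x_i,y_i,z_i\}$ having $L(x_i)=A\cup B$, $L(y_i)=A\cup D$, $L(z_i)=B\cup D$, and delete the edge $x_1x_2$. The two forced pairs $\{y_1,z_1\}$ and $\{y_2,z_2\}$ both carry list $D$, and if in each remaining $3$-part you choose $S_i=\{y_i,z_i\}$ (a legal choice under your rule), you obtain $k/2+1$ merged vertices whose lists all equal $D$, so $|\XS|=k/2+1>k/2=|\YS|$ and no saturating matching exists --- yet $G$ is $L$-colourable (colour $x_1,x_2$ and the pairs $\{x_i,y_i\}$ from $A$, the forced pairs from $D$, and the rest injectively). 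So there is no contradiction to derive: the failure of Hall's condition reflects only a bad choice of pairs, and your proof would need both a careful selection rule for the $S_i$ (akin to, but not the same as, the maximisation of $|\LS(Z)|$ in the proof of Theorem \ref{unique3}) and a new Hall analysis, neither of which you supply. The paper sidesteps matchings entirely here: setting $A_1=L(u_1)\cap L(u_2)$, $A_2=L(v_1)\cap L(w_1)$, $A_3=L(v_2)\cap L(w_2)$, it shows $L(u_1)=C-A_2$ and $L(u_2)=C-A_3$, hence $A_1\cup A_2\cup A_3=C$ has size $3k/2>k\ge |L(u_k)|$; it then picks $c_i\in A_i$ with at most two of $c_1,c_2,c_3$ in $L(u_k)$, colours \emph{both} parts $P_1,P_2$ with these three colours, and finishes with Lemma \ref{ind3} on $G-P_1-P_2$, where no $2$-parts remain and the (b-2) obstruction you noted disappears.
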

\begin{proof}
	Let $P_i=\{u_i,v_i,w_i\}$ for $1\le i\le k/2+1$ and $P_i=\{u_i\}$ for $k/2+2\le i\le k$.
	By Noel-Reed-Wu Theorem, we may assume $G=K_{3\star (k/2+1), 1\star(k/2-1)}-\{e\}$ for some edge   $e=xy$, say $x\in P_i$ and $y\in P_j$, $i <j$.  If $j \ge k/2+2$, then $G$ is a subgraph of $K_{3\star k/2, 2 \star 2, 1\star (k/2-2)}$ which is $k$-choosable by  Theorem \ref{thm-zz}.   Hence, $G$ is $L$-colourable, a contradiction.

	Thus we may assume $x=u_1$ and $y=u_2$.
	Since $|C|=3k/2$, we have $|L(u_1)\cap L(u_2)|\ge k/2$ and $|L(v_i)\cap L(w_i)|\ge k/2$ for $i=1,2$. Let $L(u_1)\cap L(u_2)=A_1$, $L(v_1)\cap L(w_1)=A_2$ and $L(v_2)\cap L(w_2)=A_3$. 
	
	Note that $L(u_1)\subseteq C-A_2$ and $L(u_2)\subseteq C-A_3$. Hence $|A_2|=|A_3|=k/2$, $L(u_1)=C-A_2$ and $L(u_2)= C-A_3$.	
	%
	This implies that
	$$|A_1\cup A_2\cup A_3|=|(C-A_2)\cap (C-A_3)|+|A_2\cup A_3|=|C-(A_2\cup A_3)|+|A_2\cup A_3|=|C|>k.$$
	So there exists  $c_i\in A_i$ such that $|L(u_k)- \{c_1,c_2,c_3\}|\ge k-2$.

	Now, we colour $u_1$, $u_2$ by colour $c_1$, colour $v_1$, $w_1$ by colour $c_2$ and colour $v_2$, $w_2$ by colour $c_3$. Let $L'(v)=L(v)-\{c_1,c_2,c_3\}$ for any vertex $v$ of $G'=G-P_1-P_2$. 
	
	It easy to verify that $G'$ and $L'$ satisfies the condition of Lemma \ref{ind3} ({for the verification of condition (c-2) , if $k\ge 6$, then it is trivial and if $k=4$, then we need to use the fact that $|L(u)\cap L(v)|\le 2$ for any $u,v$ of 3-part $P$.})  and hence $G'$ is $L'$-colourable, a contradiction.
\end{proof}

Corollary \ref{col-1} follows from Theorem \ref{thm-4proper} and Theorem \ref{thm-3proper}.

\end{document}